\newcommand{\beq}[1]{ \begin{equation}\label{#1} }
\newcommand{\eeq}{\end{equation}}
\numberwithin{equation}{section}
\DeclareMathOperator*{\esssup}{ess\,sup}
\newtheorem{theorem}{Theorem}
\newtheorem{lemma}{Lemma}
\newtheorem{corollary}{Corollary}
\newtheorem{example}{Example}
\newtheorem{proposition}{Proposition}
\journal{Journal of Mathematical Analysis and Applications}
\begin{document}

\begin{frontmatter}

\title{Asymptotic properties of neutral type linear systems}

\author[label1]{Leonid Berezansky}
\author[label2]{Elena Braverman}
\address[label1]{Dept. of Math.,
Ben-Gurion University of the Negev,
Beer-Sheva 84105, Israel}
\address[label2]{Dept. of Math. and Stats., University of
Calgary, 
Calgary, AB  T2N 1N4, Canada}

\begin{abstract}
Exponential stability and solution estimates are investigated for a delay system 
$$
\dot{x}(t) - A(t)\dot{x}(g(t))=\sum_{k=1}^m B_k(t)x(h_k(t))
$$
of a neutral type,
where $A$ and $B_k$ are $n\times n$ bounded matrix functions, and $g, h_k$ are delayed arguments.
Stability tests are applicable to a wide class of linear neutral systems with time-varying coefficients and delays.
In addition, explicit exponential estimates for solutions of both homogeneous and non-homogeneous neutral systems 
are obtained for the first time. 
These inequalities are not just asymptotic estimates, they are valid on every finite segment and evaluate both short- and long-term
behaviour of solutions. 
\end{abstract}


\begin{keyword}
linear neutral delay system, exponential stability, non-autonomous system,
exponential estimates of solutions, matrix measure

\noindent
{\bf AMS subject classification:} 
34K20, 34K40, 34K25, 34K06 
\end{keyword}

\end{frontmatter}

\section{Introduction}

There are many papers and monographs on stability of scalar neutral differential equations, see a review of
explicit stability tests for this class of equations in \cite{BB1, BB2}.
For linear vector neutral delay equations, explicit exponential stability results can be found 
in the monographs \cite{AzbSim,Gil,Gop2,KM,KolmNos,Shaikhet} and the papers 
\cite{Walther,Dom,Gop1, GH, Han,Li-Ming,Ngoc2,Ngoc1,Park}.

A  most popular approach to study stability for all classes 
of functional differential equations, including neutral,
is the application of Lyapunov-Krasovskii functionals \cite{Gop2, KM, KolmNos, Shaikhet}.
LMI (linear matrix inequality) method is also based on the method of Lyapunov-Krasovskii functionals \cite{Fridman,Liu, Park}.
Some stability results were obtained by using the Bohl-Perron theorem \cite{AzbSim,BB1, BB2, BDSZ1, BDSZ2, BDSZ3, Dom} and application of special properties of matrices, including fundamental matrix functions for differential equations \cite{Gil, GP, GH, Ngoc2, Ngoc1}. 
Several tests 
for linear and nonlinear scalar neutral equations were obtained by the fixed point method \cite{Jin, Zhao}.
Stability conditions based on the application of fixed point theorems usually include the first and the second derivatives of delay functions.
A more detailed discussion of these methods and comparison of stability 
results of the present paper to some earlier obtained is postponed to the end of the paper.

Recently in \cite{BB2}, exponential estimates were derived for a neutral delay equation
\begin{equation}\label{1.1}
\dot{x}(t) - a(t)\dot{x}(g(t)) =\sum_{k=1}^m b_k(t)x(h_k(t)), ~~t \geq t_0\geq 0
\end{equation}
in the scalar case.
In the present paper we extend the results obtained in \cite{BB2} to a system
\begin{equation}\label{1.2}
\dot{x}(t) - A(t)\dot{x}(g(t)) =\sum_{k=1}^m B_k(t)x(h_k(t)), ~~t \geq t_0\geq 0,
\end{equation}
where some of the stability tests are new also for scalar equation \eqref{1.1} (we will discuss this question in the last section)
and for a vector linear delay differential equations without the neutral part when $A(t)\equiv 0$.

We study non-autonomous system (\ref{1.2}) and get stability tests where not only the maximal of all the delays 
but each $h_k$ is taken into account.

These  tests are explicit, we compare them to some earlier results. 
Our second goal is to develop exponential estimates for solutions of a non-homogeneous version of (\ref{1.2}), 
with coefficient of the exponential part described by a maximum of the initial functions, while the maximum of the right-hand side leads to a constant bound, not an exponential estimate of a solution.
We evaluate short-term, as well as long-term (asymptotic) behaviour of a solution. 
To the best of our knowledge, for neutral differential systems such exponential  estimates have not been known, and the present paper fills the gap.

Equation \eqref{1.2} includes equations without the neutral part
$$
\dot{x}(t)=\sum_{k=1}^m B_k(t)x(h_k(t)), ~~t \geq t_0\geq 0,
$$
an equation with one delay ($m=1$) and autonomous equations.

In the monograph~\cite{KM}, the method of Lyapunov-Krasovskii functionals was applied to
local asymptotic stability for vector nonlinear delay differential equations.
Relevant Theorem~3.1 in~\cite[p. 286]{KM} was
actually obtained for nonlinear equations, and we reformulate it for the linear case
\begin{equation}\label{4.0}
\dot{x}(t)=B(t)x(t-h),\,\,\,t\geq 0,
\end{equation}
where $B$ is an $n\times n$ matrix with locally essentially bounded entries, and $h>0$ is a constant delay.

Below, $\|\cdot\|$ is a norm in ${\mathbb R}^n$, and $\mu(A)$ is a matrix measure which is defined in the next section.

\begin{proposition}\label{p3}
Let $t_0\geq 0$, $\mu(B(t))\leq -\beta<0$ and
$\displaystyle 
h \, \sup_{t\geq t_0}\|B(t)\|^2< 
\inf_{t\geq t_0}\,|\mu(B(t))|$.

Then equation~\eqref{4.0} is uniformly asymptotically stable.
\end{proposition}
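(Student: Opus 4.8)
The plan is to convert the delay equation into an integro-differential equation whose ``diagonal'' part can be controlled by the matrix measure, and then to close the estimate with a Halanay-type comparison. First I would use the identity $x(t-h) = x(t) - \int_{t-h}^t \dot{x}(s)\,ds$ together with the equation itself, $\dot{x}(s) = B(s)x(s-h)$, to rewrite \eqref{4.0}, for $t \geq t_0 + h$, in the form
\[
\dot{x}(t) = B(t)x(t) - B(t)\int_{t-h}^t B(s)x(s-h)\,ds.
\]
The purpose of this rewriting is that the leading term is now $B(t)x(t)$, to which the matrix measure applies directly, while the delay survives only as an integral perturbation.

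Second, I would invoke the standard matrix-measure inequality: for a system $\dot{y} = B(t)y + f(t)$, the upper right Dini derivative of the norm obeys $D^+\|y(t)\| \leq \mu(B(t))\|y(t)\| + \|f(t)\|$. Applying this with $f(t) = -B(t)\int_{t-h}^t B(s)x(s-h)\,ds$, and setting $\alpha := \inf_{t\geq t_0}|\mu(B(t))|$ and $b := \sup_{t\geq t_0}\|B(t)\|$, I would obtain the scalar differential inequality
\[
D^+\|x(t)\| \leq -\alpha\,\|x(t)\| + b^2\!\int_{t-h}^t \|x(s-h)\|\,ds \leq -\alpha\,\|x(t)\| + b^2 h \sup_{t-2h\leq s\leq t}\|x(s)\|,
\]
where I used that $\mu(B(t)) = -|\mu(B(t))| \leq -\alpha$ (valid since $\mu(B(t)) < 0$) and bounded the integrand by the supremum.

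Third, I would recognize the last inequality as a generalized Halanay inequality for $v(t) := \|x(t)\|$, namely $D^+ v(t) \leq -\alpha\, v(t) + \gamma \sup_{[t-2h,\,t]} v$ with $\gamma = b^2 h$. The hypothesis $h\sup_{t\geq t_0}\|B(t)\|^2 < \inf_{t\geq t_0}|\mu(B(t))|$ is precisely $\gamma < \alpha$, which is exactly the condition forcing such a Halanay inequality to drive $v(t) \to 0$ exponentially, with rate and constant depending only on $\alpha$, $b$, $h$ and the initial data on $[t_0-h,t_0]$. Since all these constants are uniform in $t_0$, the conclusion is uniform asymptotic stability.

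The main obstacle I anticipate is the bookkeeping at the initial interval: the rewriting above is valid only for $t\geq t_0+h$, since the identity for $x(t-h)$ uses the equation on $[t-h,t]$. On $[t_0,t_0+h]$ one must estimate $\|x\|$ directly in terms of the initial data, which is straightforward because $\dot{x}(t)=B(t)x(t-h)$ yields a Gronwall bound there; this bound then serves as the effective initial data at $t_0+h$ for the Halanay argument. A secondary technical point is justifying the Dini-derivative manipulations and the Halanay comparison rigorously for the non-smooth function $\|x(t)\|$, but these steps are routine.
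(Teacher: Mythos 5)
Your proof is correct, and the two technical points you flag (the Dini-derivative estimate for $\|x(t)\|$ under merely measurable $B$, and the bound on the initial layer $[t_0,t_0+h]$) are indeed routine. Note, however, that the paper does not prove Proposition~\ref{p3} at all: it is stated as a reformulation, for the linear equation \eqref{4.0}, of Theorem~3.1 of \cite[p.~286]{KM}, whose proof there is by Lyapunov--Krasovskii functionals. So your argument is a genuinely different route from the one behind the cited result --- but it is strikingly close to the machinery this paper builds for its own theorems: your identity $x(t-h)=x(t)-\int_{t-h}^{t}B(s)x(s-h)\,ds$ is exactly the model transformation used in \eqref{2.9a} in the proof of Lemma~\ref{lemma2.1}, and your matrix-measure step corresponds to the exponential estimate \eqref{2.9}. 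The difference is in how the estimate is closed: the paper bounds the fundamental matrix by a fixed-point-type norm inequality ($\|z\|_J\le 1+K_0\|z\|_J$, giving the $(1-K_0)^{-1}$ bound), whereas you close it with a Halanay comparison $D^+v\le -\alpha v+\gamma\sup_{[t-2h,t]}v$ with $\gamma=hb^2<\alpha$. As for what each approach buys: the Lyapunov--Krasovskii argument in \cite{KM} covers nonlinear equations (the proposition here is its linear specialization); your argument is elementary, needs only essentially bounded measurable $B$, and in fact delivers more than the stated conclusion --- uniform \emph{exponential} stability with explicit rate $\delta>0$ solving $\delta=\alpha-\gamma e^{2\delta h}$ and explicit constant $(1+bh)e^{\delta h}$ --- which, for linear equations with bounded delay and no neutral term, is equivalent to uniform asymptotic stability \cite{Hale,Malygina}. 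Two cosmetic remarks: in your second display the integrand should first be bounded as $\|B(t)\|\,\|B(s)\|\,\|x(s-h)\|\le b^{2}\|x(s-h)\|$ (clearly what you meant), and the supremum window $[t-2h,t-h]$ would suffice, though enlarging it to $[t-2h,t]$ is harmless since the Halanay condition $\gamma<\alpha$ is unchanged.
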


A significant part of known asymptotic stability tests for neutral equations was obtained for autonomous systems with a non-delay term, such as 
\begin{equation}\label{1.2a}
\dot{x}(t) - A_1 \dot{x}(t-h_1) = A_0 x(t)+A_2 x(t-h_2), ~h_1, h_2>0, ~t \geq t_0.
\end{equation}

The following stability tests were obtained using Lyapunov-Krasovskii functionals.

\begin{proposition} \cite[P. 367]{KM},\cite{Li-Ming}
\label{p1}
Let
$
\|A_1\|<1$ and $\displaystyle \mu (A_0)+\frac{\|A_0\|+\|A_1\|\|A_2\|}{1-\|A_1\|}<0$.

Then system \eqref{1.2a} is asymptotically stable.
\end{proposition}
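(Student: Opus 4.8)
The plan is to follow the Lyapunov--Krasovskii strategy in the form best adapted to the matrix measure, first removing the neutral derivative by means of the difference (D-)operator. Introduce $y(t)=x(t)-A_1x(t-h_1)$; then along solutions of \eqref{1.2a} one has $\dot y(t)=\dot x(t)-A_1\dot x(t-h_1)=A_0x(t)+A_2x(t-h_2)$, an equation that is \emph{free of the delayed derivative} $\dot x(t-h_1)$. Using $x(t)=y(t)+A_1x(t-h_1)$ to write $A_0x(t)=A_0y(t)+A_0A_1x(t-h_1)$ puts this in the form $\dot y(t)=A_0y(t)+f(t)$, where $f(t)=A_0A_1x(t-h_1)+A_2x(t-h_2)$ collects the delayed values of $x$. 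The matrix measure then yields the one-sided differential inequality $D^+\|y(t)\|\le\mu(A_0)\,\|y(t)\|+\|f(t)\|$, so that the hypothesis $\mu(A_0)<0$ supplies the decay mechanism while $\|A_0\|,\|A_2\|$ enter only through $\|f\|$.

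The second ingredient is the inversion of the difference operator, which is precisely where $\|A_1\|<1$ is used. From $x(t)=y(t)+A_1x(t-h_1)$ one obtains, by iteration (a convergent geometric series since $\|A_1\|<1$), the a priori bound $\sup_{s\le t}\|x(s)\|\le(1-\|A_1\|)^{-1}\sup_{s\le t}\|y(s)\|$ up to the contribution of the initial data. Substituting this into the estimate of $\|f(t)\|$ produces a bound $\|f(t)\|\le c\,\sup_{s\le t}\|y(s)\|$, where $c$ is assembled from $\|A_0\|,\|A_1\|,\|A_2\|$ and $(1-\|A_1\|)^{-1}$; matching $c$ to the stated coefficient $\frac{\|A_0\|+\|A_1\|\|A_2\|}{1-\|A_1\|}$ is exactly the delicate bookkeeping of the argument, since it depends on which term is kept non-delayed when $\mu(A_0)$ is extracted.

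Combining the two steps gives a Halanay-type inequality $D^+\|y(t)\|\le\mu(A_0)\,\|y(t)\|+c\,\sup_{s\le t}\|y(s)\|$; as the hypothesis reads exactly $\mu(A_0)+c<0$, the comparison (Halanay) lemma forces $\|y(t)\|\to 0$ exponentially, and the difference-operator bound of the second step transfers this decay to $x$, with all constants independent of $t_0$, giving uniform asymptotic stability. I expect the main obstacle to be the neutral term on two counts: first, one must justify that $\|A_1\|<1$ renders the D-operator boundedly invertible, so that decay of $y$ genuinely implies decay of $x$ (this is the structural reason a smallness condition on $A_1$ is unavoidable for neutral systems); second, one must extract the \emph{sharp} constant, since the most naive grouping tends to yield $\frac{\|A_0\|\|A_1\|+\|A_2\|}{1-\|A_1\|}$ rather than the stated $\frac{\|A_0\|+\|A_1\|\|A_2\|}{1-\|A_1\|}$. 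Reconciling these requires either a quadratic Lyapunov--Krasovskii functional that pairs $A_0$ with the non-delayed state and $A_2$ with the neutral term, or a more careful regrouping of the delayed contributions, and this is where the bulk of the technical work lies.
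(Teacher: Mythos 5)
First, a point of reference: the paper never proves Proposition~\ref{p1}; it is quoted from \cite[p.~367]{KM} and \cite{Li-Ming} as background, so there is no internal proof to compare yours against. Your overall strategy --- invert the difference operator $x \mapsto x - A_1 x(\cdot - h_1)$, apply the logarithmic-norm inequality to $\dot y(t)=A_0y(t)+f(t)$, and close with a Halanay-type comparison --- is indeed the standard mechanism behind results of this kind. However, your closing step has a genuine gap. After you replace $\sup_{s\le t}\|x(s)\|$ by $(1-\|A_1\|)^{-1}\sup_{s\le t}\|y(s)\|$, the functional inequality you obtain is
\begin{equation*}
D^+\|y(t)\|\le \mu(A_0)\,\|y(t)\|+c\,\sup_{t_0\le s\le t}\|y(s)\|,
\end{equation*}
with the supremum over the \emph{entire} past (this is unavoidable in your scheme: inverting the difference operator expresses $x(t)$ through $y(t-jh_1)$ for all $j\ge0$). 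Halanay's lemma requires a supremum over a window of fixed length, and its conclusion is false for the all-past version: writing $a=|\mu(A_0)|>c=b$ and $V_0$ for the initial bound, the function $v(t)=\max\bigl(V_0e^{-(a-b)(t-t_0)},\,(b/a)V_0\bigr)$ satisfies $D^+v(t)\le -av(t)+b\sup_{t_0\le s\le t}v(s)$ but does not tend to zero. So decay of $y$ does not follow as claimed. The repair is to keep the geometric weights instead of discarding them: from $\|x(t)\|\le\sum_{j\ge0}\|A_1\|^j\|y(t-jh_1)\|+(\text{initial data})$, choose $\gamma>0$ so small that $\|A_1\|e^{\gamma h_1}<1$ and $\mu(A_0)+\gamma+\bigl(\|A_0\|\|A_1\|+\|A_2\|\bigr)e^{\gamma\max(h_1,h_2)}\big/\bigl(1-\|A_1\|e^{\gamma h_1}\bigr)<0$ (possible by continuity), and run a first-violation argument for the ansatz $\|y(t)\|\le Ke^{-\gamma(t-t_0)}$; the weighted sum then transfers the decay to $x$.

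Second, the constant, which you flag as the main difficulty: you should not try to reconcile your $c=\frac{\|A_0\|\|A_1\|+\|A_2\|}{1-\|A_1\|}$ with the printed numerator $\|A_0\|+\|A_1\|\|A_2\|$, because no regrouping or alternative functional can produce the latter. The reason is that the hypothesis as printed can never hold: since $|\mu(C)|\le\|C\|$ gives $\mu(A_0)\ge-\|A_0\|$, one has
\begin{equation*}
\mu(A_0)+\frac{\|A_0\|+\|A_1\|\|A_2\|}{1-\|A_1\|}\;\ge\;-\|A_0\|+\frac{\|A_0\|}{1-\|A_1\|}+\frac{\|A_1\|\|A_2\|}{1-\|A_1\|}\;=\;\frac{\|A_1\|\bigl(\|A_0\|+\|A_2\|\bigr)}{1-\|A_1\|}\;\ge\;0,
\end{equation*}
so Proposition~\ref{p1} as stated is vacuously true. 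This is evidently a transposition of $A_0$ and $A_2$ in the numerator of the source result, whose condition reads $\mu(A_0)+\frac{\|A_2\|+\|A_1\|\|A_0\|}{1-\|A_1\|}<0$ --- exactly the constant your ``naive'' bookkeeping delivers, and a condition that is genuinely satisfiable (for $A_1=0$ it reduces to the classical $\mu(A_0)+\|A_2\|<0$). In short: your decomposition and constant are the right ones; the real work needed is the corrected comparison argument above, which then yields exponential, hence uniform asymptotic, stability of \eqref{1.2a}.
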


\begin{proposition}\label{p2} \cite[P. 369]{KM} 
If $\|A_1\|+ h_2\|A_2\|<1$ and $$\displaystyle \mu (A_0+A_2)+\frac{\|A_0+A_2\|(\|A_1\|+h_2\|A_2\|)}{1-\|A_1\|- h_2\|A_2\|}<0$$
then system~\eqref{1.2a} is asymptotically stable.
\end{proposition}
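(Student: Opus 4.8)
The plan is to reduce the neutral system \eqref{1.2a} to a non-neutral, non-delay system whose leading matrix is $A_0+A_2$, perturbed by a forcing term that is small relative to the decay rate of $A_0+A_2$, and then to close the loop with a Halanay-type inequality. The hypotheses are tailored to exactly this reduction: the combination $A_0+A_2$ is singled out in the matrix-measure condition, while the quantity $\gamma:=\|A_1\|+h_2\|A_2\|$ simultaneously measures the size of the perturbation and, being $<1$, guarantees stability of the associated difference operator.

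First I would introduce the auxiliary variable
\[
z(t)=x(t)-A_1x(t-h_1)+A_2\int_{t-h_2}^{t}x(s)\,ds,
\]
in which the integral term is designed to absorb the delayed state. Differentiating and using \eqref{1.2a} gives $\dot z(t)=A_0x(t)+A_2x(t-h_2)+A_2\bigl(x(t)-x(t-h_2)\bigr)=(A_0+A_2)x(t)$. Solving the definition of $z$ for $x(t)$ and substituting, I obtain
\[
\dot z(t)=(A_0+A_2)z(t)+r(t),\qquad r(t)=(A_0+A_2)\Bigl(A_1x(t-h_1)-A_2\int_{t-h_2}^{t}x(s)\,ds\Bigr),
\]
where the remainder satisfies $\|r(t)\|\le \|A_0+A_2\|\,\gamma\,\sup_{t-h^\ast\le s\le t}\|x(s)\|$ with $h^\ast=\max\{h_1,h_2\}$.

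Next I would pass to the scalar comparison function $V(t)=\|z(t)\|$. The defining property of the matrix measure applied to $\dot z=(A_0+A_2)z+r$ yields the Dini estimate $D^+V(t)\le \mu(A_0+A_2)\,V(t)+\|r(t)\|$. To turn this into a self-contained inequality in $V$, I must eliminate $x$ from the remainder bound: the definition of $z$ gives $\|x(t)\|\le V(t)+\gamma\,\sup_{t-h^\ast\le s\le t}\|x(s)\|$, and here the hypothesis $\gamma<1$ enters decisively, letting the supremum of $\|x\|$ be controlled by the supremum of $V$ (up to a constant from the initial data) via $\sup\|x\|\le (1-\gamma)^{-1}\sup V$. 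Combining the two estimates produces a Halanay inequality $D^+V(t)\le -a\,V(t)+b\,\sup_{t-h^\ast\le s\le t}V(s)$ with $a=|\mu(A_0+A_2)|$ and $b=\|A_0+A_2\|\,\gamma/(1-\gamma)$. The main condition is exactly $a>b$, so the classical Halanay lemma forces $V(t)=\|z(t)\|\to0$ exponentially; since $\gamma<1$ makes $z$ asymptotically equivalent to $x$, it follows that $\|x(t)\|\to0$, which is asymptotic stability.

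The step I expect to be the main obstacle is the closing of the loop in the previous paragraph: the Dini derivative of $V=\|z\|$ is driven by $\sup\|x\|$ rather than by $\sup V$, so a comparison lemma cannot be applied directly. Making the inversion $\sup\|x\|\le(1-\gamma)^{-1}\sup V$ rigorous requires care with suprema taken over sliding windows and, in particular, with the contribution of the initial interval, where $x$ and $z$ are tied to the prescribed initial data. This is precisely where $\gamma<1$ is indispensable, and where a careless estimate would fail to recover the sharp constant $1/(1-\gamma)$ that makes the Halanay condition $a>b$ coincide with the stated inequality.
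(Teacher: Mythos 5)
Your reduction is correct as far as it goes, and it is necessarily a different route from the source: the paper gives no proof of this proposition at all, quoting it from \cite[P.~369]{KM}, where it is established by a Lyapunov--Krasovskii functional. With $z(t)=x(t)-A_1x(t-h_1)+A_2\int_{t-h_2}^{t}x(s)\,ds$ one indeed gets $\dot z(t)=(A_0+A_2)x(t)=(A_0+A_2)z(t)+r(t)$, your bound on $\|r\|$, and the Dini estimate $D^+\|z(t)\|\le\mu(A_0+A_2)\|z(t)\|+\|r(t)\|$. The genuine gap is exactly the step you flagged, and it is not a matter of carefulness: the self-contained inequality $D^+V(t)\le -aV(t)+b\sup_{t-h^\ast\le s\le t}V(s)$ with $b=\|A_0+A_2\|\gamma/(1-\gamma)$ cannot be derived. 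If the suprema are sliding, the inversion $\sup_{[t-h^\ast,t]}\|x\|\le(1-\gamma)^{-1}\sup_{[t-h^\ast,t]}V$ is false: the relation $\|x(t)\|\le V(t)+\gamma\sup_{[t-h^\ast,t]}\|x\|$ reaches one window further into the past at each application, so unrolling it controls $\|x(t)\|$ only by $\sup_{[t_0,t]}V$ plus a term of order $\gamma^{(t-t_0)/h^\ast}$ coming from the initial data. (The inversion is a purely functional claim about the pair $(x,z)$, and it already fails in the scalar case: for $A_2=0$, $A_1=\gamma$, the function $x(t)=\gamma^{t/h_1}$ gives $z\equiv 0$ while $x>0$ everywhere, so the sliding-window sup of $V$ is zero and that of $\|x\|$ is not.) If instead you take growing windows $[t_0,t]$, the inversion holds, but then the resulting inequality is not a Halanay inequality and its conclusion genuinely fails: for $0<b<a$, the solution of $\dot V=-aV+bV(t_0)$ is decreasing, satisfies $D^+V(t)\le -aV(t)+b\sup_{t_0\le s\le t}V(s)$ with equality, and tends to $(b/a)V(t_0)>0$, not to zero. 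Under either reading, the classical Halanay lemma cannot close your loop.

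The strategy is repairable, but the closing mechanism must be replaced, not merely tightened. Write $a=-\mu(A_0+A_2)>0$, $c=\|A_0+A_2\|$; the hypothesis is exactly $c\gamma/(1-\gamma)<a$, equivalently $\theta:=\gamma+c\gamma/a<1$. First prove boundedness and stability with growing windows, where the inversion is legitimate: with $M(t)=\sup_{t_0-h^\ast\le s\le t}\|x(s)\|$, comparison gives $V(t)\le V(t_0)+(c\gamma/a)M(t)$, hence $\|x(t)\|\le V(t_0)+\theta M(t)$ and $M(t)\le\max\{M(t_0),\,V(t_0)/(1-\theta)\}$. Then close with limits superior, which do tolerate sliding windows: setting $L_x=\limsup_{t\to\infty}\|x(t)\|$ and $L_V=\limsup_{t\to\infty}V(t)$ (finite by the first step), one gets $L_V\le(c\gamma/a)L_x$ by comparison on $[T,\infty)$ once the sliding sup of $\|x\|$ is below $L_x+\varepsilon$, and $L_x\le L_V+\gamma L_x$ from the definition of $z$; combining, $L_x\le \frac{c\gamma}{a(1-\gamma)}\,L_x$ with coefficient $<1$, so $L_x=0$. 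Alternatively, multiply by $e^{\lambda t}$ for small $\lambda>0$ and run the growing-window estimate on the weighted functions; this needs only boundedness, yields exponential (not just asymptotic) stability, and is in essence the mechanism of the paper's own Lemma~\ref{lemma2.1} and Theorem~\ref{theorem3.1} (substitution $x=e^{-\lambda(t-t_0)}y$ followed by a norm estimate closed by a constant smaller than one). Either repair keeps your decomposition and your constants; what it discards is precisely the appeal to Halanay's lemma.
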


The structure of the paper is the following. Section~2 presents preliminaries, while
main results on solution estimates and exponential stability are obtained in Section 3.
We get solution estimates for system \eqref{1.2} or its non-homogeneous counterpart and derive stability tests for \eqref{1.2}.
Section~4 illustrates novelty of the results with examples and indicates possible directions of further research. We also discuss Propositions \ref{p3}-\ref{p2} and compare them to the results obtained in the paper in Section~4.

\section{Preliminaries}

There exists a well developed theory of neutral linear differential equations including existence, uniqueness, representation of solutions,
see e.g. \cite{AzbSim,Gil,Hale,KM,KolmNos}. We use some results and definitions from these monographs, including definitions
of asymptotic and uniform exponential stability.

Let  $\|x\|$ be an arbitrary vector norm of $x \in {\mathbb R}^n$, the same notation is  
used for the induced matrix norm. 
The matrix measure 
(often referred to as the logarithmic matrix norm) of an $n\times n$ matrix $C$
is defined as
$$
\mu (C)= \left. \left. \lim_{\nu\to 0^+}{\nu}^{-1}\right( \,{\|E+\nu C\|-1\,}\right),
$$
where $E$ is the $n\times n$ identity matrix. Some of its properties are useful in calculations:
$$
|\mu(C)|\leq \|C\|, ~~\mu(C_1+C_2)\leq \mu(C_1)+\mu(C_2), ~~\mu(\lambda C)=\lambda \mu(C), ~\lambda >0.
$$
For details, we refer readers to~\cite{S,Z}
and~\cite[Table 3.1, p. 286]{KM}.

In particular, if we use the maximum norm $\|x\|_{\infty}=\max \{|x_1|,\dots,|x_n|\}$ then 
$$\|C\|=\max_{1\leq i \leq n} \left\{ \sum_{j=1}^n |c_{ij}| \right\}, 
~~ \mu (C)=\max_{1\leq i \leq n} \left\{  c_{ii}+ \sum_{j\neq i} |c_{ij}|  \right\}.
$$
For either a fixed interval $J=[t_0,t_1]$ or  $J=[t_0,\infty)$, consider the space $L_{\infty}(J)$ 
with an essential supremum norm
$\|x\|_J= \esssup_{t\in J} \|x(t)\|$.
Further, by $\sup$ of any Lebesgue measurable function we mean an essential supremum without mentioning it.

For system  \eqref{1.2}, the functions $A, B_k, g, h_k$ are assumed to be Lebesgue measurable on 
$[t_0,\infty)$, where $t_0\geq 0$ is an arbitrary initial point,  
$n\times n$ matrix functions $A$ and $B_k$ are essentially bounded
on $[t_0,\infty)$, delays in $g$ and $h_k$ are bounded: $0\leq t-g(t)\leq \sigma$, $0\leq t-h_k(t)\leq \tau_k$ for $\sigma>0$, $\tau_k>0$, $k=1, \dots,m$. 
The coefficient $A$ satisfies $\|A(t)\|\leq a_0 <1$ for some $a_0<1$ for $t\geq t_0$.

The condition on the neutral delay that, once the Lebesgue
measure of the set $U$ is zero, the set $g^{-1}(U)$ has also the zero measure,
ensures that $x(g(t))$ is properly defined and  measurable, once $x$ is measurable.

In addition to  (\ref{1.2}), we introduce a system 
\begin{equation}
\label{2.1}
\dot{x}(t) - A(t)\dot{x}(g(t)) =\sum_{k=1}^m B_k(t)x(h_k(t)) + f(t), ~t\geq t_0
\end{equation}
with initial conditions
\begin{equation}
\label{2.2}
x(t)=\Phi (t) \mbox{~for~~} t \leq t_0, \quad \dot{x}(t)=\Psi (t) \mbox{~for~~}t<t_0.
\end{equation}
Here $f \in L_{\infty}([t_0,c])$ for any $c>t_0$, while 
$\Phi:[t_0-\max_k \tau_k,t_0] \rightarrow {\mathbb R}^n$ and $\Psi : [t_0-\sigma,t_0) \rightarrow {\mathbb R}^n$, 
in addition to boundedness, are supposed to be Borel measurable.

Everywhere below we let these assumptions be satisfied, for example, considering (\ref{2.1}),(\ref{2.2}) or
any delay system, without repeating them.

By {\em a solution of problem} (\ref{2.1}),(\ref{2.2}) we mean a locally absolutely continuous on $[t_0,\infty)$
function $x: {\mathbb R} \rightarrow {\mathbb R}^n$ 
satisfying system (\ref{2.1}) for almost all $t\in [t_0,\infty)$ and
initial conditions (\ref{2.2}) when $t\leq t_0$.

The above conditions on the parameters of the equations and initial conditions  guarantee existence and uniqueness of a solution of 
initial value problem \eqref{2.1},\eqref{2.2} \cite{AzbSim}, and a solution representation which will later be applied.

For each $s\geq t_0$ we introduce {\em the fundamental matrix} as a solution $X(t,s)$ of 
\begin{equation*}
\dot{x}(t) - A(t)\dot{x}(g(t)) =\sum_{k=1}^m B_k(t)x(h_k(t)), ~x(t)=0,~\dot{x}(t)=0,~t<s,~x(s)=E,
\end{equation*}
where $E$ is the identity matrix. 
By definition, $X(t,s)$ vanishes whenever $t<s$.

If there exist positive numbers 
$M$ and $\gamma$ for which
any solution of (\ref{1.2}),(\ref{2.2}) 
satisfies
$$
\|x(t)\|\leq M e^{-\gamma (t-t_0)} \left[ \sup_{t \in [t_0 - \max_{1 \leq k \leq m} \tau_k,  t_0]} \|\Phi(t)\|+
\sup_{t \in [t_0 - \sigma,t_0]} \|\Psi(t)\| \right], ~~t\geq t_0,
$$
where neither $M$ nor $\gamma$ depends on $t_0 \geq 0$ and the initial functions $\Phi$ and $\Psi$, 
system (\ref{1.2}) is called {\em uniformly exponentially stable}. 
Also, exponential estimates of a fundamental matrix $X(t,s)$ will be considered 
$\displaystyle \|X(t,s)\|\leq M_0 e^{-\gamma_0(t-s)}$ for $t\geq s\geq t_0$ and some 
$M_0>0$ and $\gamma_0>0$ which hold with $M_0=M$, $\gamma_0=\gamma$ if (\ref{1.2}) is uniformly exponentially stable. 

Define a linear bounded operator 
on the space $L_{\infty}[t_0,t_1]$ as 
$$
\displaystyle 
(Sy)(t)=\left\{\begin{array}{ll}
A(t)y(g(t)),& g(t)\geq t_0,\\
0,& g(t)<t_0.\\
\end{array}\right. 
$$
Note that there exists a unique
solution (see, for example, \cite{AzbSim}) of
problem (\ref{2.1}),(\ref{2.2}).
This solution 
has the representation
\begin{equation}
\label{2.4}
\begin{array}{ll}
\displaystyle 
x(t)  = & \displaystyle  X(t,t_0)x_0+ \!\! \int\limits_{t_0}^t \!\!
X(t,s)\left[(I-S)^{-1}f \right](s)ds 
\\ & \displaystyle
+ \!\! \int\limits_{t_0}^{t_0+\sigma} \!\!\!\!
X(t,s)\left[ (I-S)^{-1}(A(\cdot)\Psi(g(\cdot))) \right](s)ds 
\\ & \displaystyle 
+ \sum_{k=1}^m \int\limits_{t_0}^{t_0+\tau_k} 
X(t,s)\left[(I-S)^{-1}(B_k(\cdot)\Phi(h_k(\cdot)))\right](s)ds,
\end{array}
\end{equation}
where $I:L_{\infty}(J) \to L_{\infty}(J)$ is the identity operator, and we set
$\Psi(g(t))=0$ for $g(t)\geq t_0$ and
$\Phi(h_k(t))=0$ whenever $h_k(t)\geq t_0 $. 
For any $t_1>t_0$, the norm of $(I-S)^{-1}$ satisfies \cite{AzbSim}
\begin{equation}
\label{norm}
\|(I-S)^{-1}\|_{L_{\infty}[t_0,t_1] \to L_{\infty}[t_0,t_1]}\leq \frac{1}{1-\|A\|_{[t_0,\infty)}} \, .
\end{equation}

We use an auxiliary system with a term not involving any delay
\begin{equation}\label{2.6}
\dot{z}(t)-A_0(t)\dot{z}(g(t))=C(t)z(t) 
+ \sum_{k=0}^m D_k(t)z(h_k(t)), ~t \geq t_0.
\end{equation}
First, let us evaluate the fundamental matrix of system \eqref{2.6}. 
Denote 
\begin{equation}
\label{D_def}
D(t):=\sum_{k=0}^m D_k(t).
\end{equation}

\begin{lemma}\label{lemma2.1}
Let  
$\|A_0\|_{[t_0,\infty)}<1$, there exist an $\alpha_0 <0$ such that $\mu (C(t)+D(t))\leq \alpha_0$, 
$Z$ be a fundamental matrix of system \eqref{2.6}, and
\begin{equation}\label{2.7}
K_0:= 
\frac{\|C\|_{[t_0,\infty)}+\sum_{k=0}^m \|D_k\|_{[t_0,\infty)}}{1-\|A_0\|_{[t_0,\infty)}} 
\left(\left\|\frac{A_0}{\mu(C+D)}\right\|_{[t_0,\infty)}+\sum_{k=0}^m  \tau_k \left\|\frac{D_k}{\mu(C+D)}\right\|_{[t_0,\infty)}\right)<1.
\end{equation}
Then,  $Z(t,s)$ satisfies 
\begin{equation}\label{2.5}
\|Z(t,s)\|\leq K, ~~t \geq s \geq t_0,
\end{equation}
where the bound $K$ is $K= (1-K_0)^{-1}$.
\end{lemma}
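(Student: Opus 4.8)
The plan is to reduce the neutral delay system \eqref{2.6} to an ordinary linear system driven by the delayed and neutral terms, and to control that driving term through the matrix measure. First I would introduce the fundamental matrix $W(t,s)$ of the auxiliary non-delay system $\dot{w}(t)=(C(t)+D(t))w(t)$. The standard matrix-measure estimate for linear ODEs gives $\|W(t,\xi)\|\le \exp\!\big(\int_\xi^t \mu(C(r)+D(r))\,dr\big)\le e^{\alpha_0(t-\xi)}\le 1$ for $t\ge \xi\ge t_0$; the crucial point is that $\mu(C+D)\le\alpha_0<0$ makes $W$ decay at least like $e^{\alpha_0\,\cdot}$, so its time integrals will be finite and controlled by $1/|\mu(C+D)|$.

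Next I would recast \eqref{2.6} in perturbed form. Adding and subtracting $D(t)z(t)=\sum_k D_k(t)z(t)$ turns the equation into $\dot{z}-(C+D)z=A_0(t)\dot{z}(g(t))+\sum_{k=0}^m D_k(t)\big(z(h_k(t))-z(t)\big)$, so variation of parameters against $W$ yields, for $z(\cdot)=Z(\cdot,s)$ with $z(s)=E$,
\[
z(t)=W(t,s)+\int_s^t W(t,\xi)\left[A_0(\xi)\dot{z}(g(\xi))+\sum_{k=0}^m D_k(\xi)(z(h_k(\xi))-z(\xi))\right]d\xi.
\]
Two ingredients then feed the estimate. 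The neutral structure bounds the derivative: writing the equation as $\dot{z}-S_0\dot{z}=Cz+\sum_k D_k z(h_k)$ with $(S_0 v)(t)=A_0(t)v(g(t))$, the operator bound \eqref{norm} (with $A_0$ in place of $A$) gives $\sup\|\dot{z}\|\le \frac{\|C\|+\sum_k\|D_k\|}{1-\|A_0\|}\,P=:B\,P$, where $P:=\sup_{t\ge s}\|z(t)\|$. The delayed differences are then controlled via $z(h_k(\xi))-z(\xi)=-\int_{h_k(\xi)}^{\xi}\dot{z}(r)\,dr$, so that $\|z(h_k(\xi))-z(\xi)\|\le\tau_k\sup\|\dot{z}\|\le \tau_k BP$.

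The key computation is the integral of $\|W\|$ against the coefficients. Using $\|A_0(\xi)\|\le |\mu(C(\xi)+D(\xi))|\,\|A_0/\mu(C+D)\|$ together with $\frac{d}{d\xi}e^{\int_\xi^t\mu(C(r)+D(r))\,dr}=|\mu(C(\xi)+D(\xi))|\,e^{\int_\xi^t\mu(C(r)+D(r))\,dr}$, the integral telescopes:
\[
\int_s^t\|W(t,\xi)\|\,\|A_0(\xi)\|\,d\xi\le \|A_0/\mu(C+D)\|\,(1-e^{\int_s^t\mu(C(r)+D(r))\,dr})\le \|A_0/\mu(C+D)\|,
\]
and analogously $\int_s^t\|W(t,\xi)\|\,\|D_k(\xi)\|\,d\xi\le \|D_k/\mu(C+D)\|$. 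Substituting these bounds together with $\|W(t,s)\|\le 1$ into the variation-of-parameters formula produces an estimate of the form $\|z(t)\|\le 1+K_0P$, with $K_0$ exactly as in \eqref{2.7}. Taking the supremum over $t\ge s$ gives $P\le 1+K_0P$, and since $K_0<1$ this yields $P\le(1-K_0)^{-1}=K$, which is \eqref{2.5}.

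The main obstacle is closing this self-referential estimate rigorously: the derivative bound is stated in terms of $P=\sup\|z\|$, the very quantity being bounded, so one must first argue that $P$ and $\sup\|\dot{z}\|$ are finite (which holds on every compact interval by absolute continuity) and establish $\|z(t)\|\le 1+K_0P$ with $K_0$ independent of the interval before passing to the supremum. A related delicate point is the initial segment $\xi\in[s,s+\tau_k]$, where $h_k(\xi)<s$ forces $z(h_k(\xi))=0$ and the representation of the difference as an integral of $\dot{z}$ must account for the jump of the fundamental matrix at $t=s$; this boundary contribution has to be shown not to spoil the bound $\tau_k\sup\|\dot{z}\|$.
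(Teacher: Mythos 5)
Your proposal is correct and is essentially the paper's own proof: the paper likewise bounds $\|\dot z\|_J$ via \eqref{norm} (its inequality \eqref{2.8}), rewrites \eqref{2.6} in the perturbed form \eqref{2.9a} (your difference $z(h_k(\xi))-z(\xi)$ appearing there as $-\int_{h_k(\xi)}^{\xi}\dot z(r)\,dr$), applies variation of parameters against the ODE fundamental matrix with the Coppel estimate \eqref{2.9}, uses the same telescoping bound $\left\|\int_{t_0}^{t}Y(t,s)\mu(C(s)+D(s))\,ds\right\|\le 1$, and closes the self-referential inequality $\|z\|_J\le 1+K_0\|z\|_J$ on finite intervals $J=[t_0,t_1]$ with $K_0$ independent of $t_1$ --- exactly the regularization you describe for making the supremum argument rigorous. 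Regarding the initial-segment/jump issue raised in your last paragraph: the paper passes over it silently (identity \eqref{2.9a} carries the same caveat for those $t$ with $h_k(t)<s$, where the fundamental matrix jumps from $0$ to $E$), so on that point you are, if anything, more scrupulous than the published argument.
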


\begin{proof}
We denote $z(t)=Z(t,t_0)$ to make expressions shorter. 
By definition, $z$ satisfies (\ref{2.6}), where the initial matrix is $z(t_0)=E$, and the initial functions are identically equal to the zero matrix.
For an arbitrary $t_1>t_0$, denote $J=[t_0,t_1]$. By (\ref{norm}) and (\ref{2.6}), we get the inequality
\begin{equation}\label{2.8}
 \|\dot{z}\|_J\leq  \frac{\|C\|_{[t_0,\infty)}+\sum_{k=0}^m \|D_k\|_{[t_0,\infty)}}{1-\|A_0\|_{[t_0,\infty)}} \|z\|_J.
\end{equation}
A fundamental matrix $Y(t,s)$ of the ordinary differential equation
$$
\dot{y}(t)=(C(t)+D(t))y(t)
$$
has an exponential estimate \cite[Page 9]{Coppel}
\begin{equation}\label{2.9}
\|Y(t,s)\|\leq e^{\int_s^t \mu (C(\xi)+D(\xi))d\xi}.
\end{equation}
Further, we rewrite (\ref{2.6}) with a different non-delay term 
\begin{equation}\label{2.9a}
\dot{z}(t)=[C(t)+D(t)]z(t)+A_0(t)\dot{z}(g(t))-\sum_{k=0}^m D_k(t)\int_{h_k(t)}^t \dot{z}(\xi)d\xi.
\end{equation}

Integrating from $t_0$ to $t \leq t_1$, we get: 
$$
\begin{array}{ll}
z(t)= & \displaystyle Y(t,t_0)+\int_{t_0}^t Y(t,s)\mu (C(s)+D(s))
\left [\frac{A_0(s)}{\mu(C(s)+D(s))}\dot{z}(g(s)) \right. \\ 
& \left. \displaystyle  -\sum_{k=0}^m \frac{D_k(s)}{\mu(C(s)+D(s))}\int_{h_k(s)}^s \dot{z}(\xi)d\xi\right]ds.
\end{array}
$$
Therefore, using (\ref{2.8}),(\ref{2.9}), inequality $\mu (C(t)+D(t))\leq \alpha_0<0$  
leads to
$$
\left\| \int_{t_0}^t Y(t,s) \mu (C(s)+D(s))~ds \right\| \leq 1
$$
and also recalling  $K_0$ from (\ref{2.7}),
$$
\|z\|_J\leq 1+\left(\left\|\frac{A_0}{\mu(C+D)}\right\|_{[t_0,\infty)}
+\sum_{k=0}^m \tau_k\left\|\frac{D_k}{\mu(C+D)}\right\|_{[t_0,\infty)}\right)\|\dot{z}\|_J
\leq 1+ K_0 \| z \|_J.
$$
We get
$ \|Z(t,t_0)\|_{[t_0,t_1]} \leq (1-K_0)^{-1}$, and the expression in 
the right-hand side 
does not depend on $t_1$,
which implies
 $ \displaystyle \|Z(t,t_0)\|_{[t_0,\infty)}\leq  (1-K_0)^{-1}$.
This inequality is also valid if $t_0$ is replaced with $s>t_0$.
Hence estimate (\ref{2.5}) is valid, which concludes the proof.
\end{proof}

\begin{lemma}\label{lemma2.2}
Let  $\displaystyle \|A_0\|_{[t_0,\infty)}+\sum_{k=0}^m  \tau_k \|D_k\|_{[t_0,\infty)}<1$, $\mu (C(t)+D(t))\leq \alpha_0$ for some $\alpha_0 <0$, where $D$ from (\ref{D_def}) is used, $Z$ be a fundamental matrix of (\ref{2.6}), and the constant
\begin{equation*}
\displaystyle L_0:=  \displaystyle  \frac{\|C+D\|_{[t_0,\infty)}}{1-\|A_0\|_{[t_0,\infty)}-\sum_{k=0}^m  \tau_k \|D_k\|_{[t_0,\infty)}}   
\displaystyle \left(\left\|\frac{A_0}{\mu(C+D)}\right\|_{[t_0,\infty)}+\sum_{k=0}^m  \tau_k \left\|\frac{D_k}{\mu(C+D)}\right\|_{[t_0,\infty)}\right)<1. 
\end{equation*}
Then, $Z(t,s)$  satisfies (\ref{2.5}), where $L= (1-L_0)^{-1}$ is used instead of $K$.
\end{lemma}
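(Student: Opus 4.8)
The plan is to follow the proof of Lemma~\ref{lemma2.1} almost verbatim, changing only the way the derivative estimate for $z(t):=Z(t,t_0)$ is obtained. The new first hypothesis $\|A_0\|_{[t_0,\infty)}+\sum_{k=0}^m \tau_k\|D_k\|_{[t_0,\infty)}<1$ is the signal that the bound (\ref{2.8}) should be replaced by a sharper one derived from the rewritten form (\ref{2.9a}) rather than from the original system (\ref{2.6}); once that is in place, the rest of the machinery transfers unchanged.

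First I would set $z(t)=Z(t,t_0)$, fix $t_1>t_0$ and write $J=[t_0,t_1]$. Starting from (\ref{2.9a}) I would take norms pointwise in $t$: the non-delay term contributes $\|C+D\|_{[t_0,\infty)}\|z\|_J$, the neutral term contributes $\|A_0\|_{[t_0,\infty)}\|\dot z\|_J$ (using that $\dot z$ vanishes before $t_0$, so $\|\dot z(g(t))\|\leq\|\dot z\|_J$ in all cases), and each difference integral satisfies $\|\int_{h_k(t)}^t\dot z(\xi)\,d\xi\|\leq\tau_k\|\dot z\|_J$ because $0\leq t-h_k(t)\leq\tau_k$. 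Collecting the terms involving $\dot z$ on the left gives $(1-\|A_0\|_{[t_0,\infty)}-\sum_{k=0}^m\tau_k\|D_k\|_{[t_0,\infty)})\|\dot z\|_J\leq\|C+D\|_{[t_0,\infty)}\|z\|_J$, and dividing by the quantity $1-\|A_0\|_{[t_0,\infty)}-\sum_{k=0}^m\tau_k\|D_k\|_{[t_0,\infty)}$, which is positive by hypothesis, yields the replacement for (\ref{2.8}):
\[
\|\dot z\|_J\leq\frac{\|C+D\|_{[t_0,\infty)}}{1-\|A_0\|_{[t_0,\infty)}-\sum_{k=0}^m\tau_k\|D_k\|_{[t_0,\infty)}}\,\|z\|_J.
\]

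From here the argument is identical to that of Lemma~\ref{lemma2.1}. I would use the same variation-of-parameters representation of $z$ through the fundamental matrix $Y(t,s)$ of $\dot y=(C+D)y$, invoke the exponential estimate (\ref{2.9}) together with $\mu(C(t)+D(t))\leq\alpha_0<0$ to obtain $\|\int_{t_0}^t Y(t,s)\mu(C(s)+D(s))\,ds\|\leq1$, and bound the two remaining integrals by $\|\dot z\|_J$ times $\|A_0/\mu(C+D)\|_{[t_0,\infty)}$ and $\tau_k\|D_k/\mu(C+D)\|_{[t_0,\infty)}$ respectively. Substituting the new derivative estimate in place of (\ref{2.8}) produces $\|z\|_J\leq1+L_0\|z\|_J$ with $L_0$ as defined in the statement, hence $\|z\|_J\leq(1-L_0)^{-1}$. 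Since the bound is independent of $t_1$ and of the base point (replacing $t_0$ by any $s>t_0$), estimate (\ref{2.5}) follows with $L=(1-L_0)^{-1}$.

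The only genuinely new ingredient, and the step to get right, is the derivative estimate; everything else is inherited from Lemma~\ref{lemma2.1}, since the parenthetical factor built from $\|A_0/\mu(C+D)\|$ and $\tau_k\|D_k/\mu(C+D)\|$ is the same in $K_0$ and $L_0$. The point is that estimating $\dot z$ from (\ref{2.9a}) rather than from (\ref{2.6}) trades the factor $(\|C\|+\sum\|D_k\|)/(1-\|A_0\|)$ for $\|C+D\|/(1-\|A_0\|-\sum\tau_k\|D_k\|)$, which is advantageous precisely when $\|C+D\|$ is substantially smaller than $\|C\|+\sum\|D_k\|$, i.e.\ when cancellation occurs among $C$ and the $D_k$. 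The main thing to verify is that the denominator remains positive, which is exactly the content of the first hypothesis.
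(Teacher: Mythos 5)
Your proposal is correct and follows exactly the paper's own argument: the paper likewise derives the sharper derivative bound $\|\dot z\|_J\leq \|C+D\|_{[t_0,\infty)}\bigl(1-\|A_0\|_{[t_0,\infty)}-\sum_{k=0}^m\tau_k\|D_k\|_{[t_0,\infty)}\bigr)^{-1}\|z\|_J$ directly from equality (\ref{2.9a}), and then states that the remainder repeats the scheme of Lemma~\ref{lemma2.1}, which is precisely the part you spelled out. Your closing observation about when $L_0$ improves on $K_0$ is a nice bonus but not needed for correctness.
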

\begin{proof}
Again, $z(t)=Z(t,t_0)$ satisfies (\ref{2.6}) with the zero initial matrix-functions and $z(t_0)=E$.
Let $J=[t_0,t_1]$ for any $t_1>t_0$. Equality \eqref{2.9a} implies
$$
\|\dot{z}\|_J\leq \|C+D\|_{[t_0,\infty)}\|z\|_J+ \left( \|A_0\|_{[t_0,\infty)}+\sum_{k=0}^m\tau_k \|D_k\|_{[t_0,\infty)} \right) \|\dot{z}\|_J,
$$
therefore
$$
\|\dot{z}\|_J\leq \frac{\|C+D\|_{[t_0,\infty)}}{1-\|A_0\|_{[t_0,\infty)}-\sum_{k=0}^m\tau_k \|D_k\|_{[t_0,\infty)}}\|z\|_J.
$$
The rest of the proof repeats the scheme for Lemma~\ref{lemma2.1} and thus is omitted.
\end{proof}


\section{Main Results}

\subsection{Boundedness and solution estimates}

For any $\lambda \in {\mathbb R}$, denote the  matrix function
\begin{equation}\label{3.0}
P(t):=\sum_{k=1}^m e^{\lambda(t-h_k(t))}B_k(t)-\lambda e^{\lambda(t-g(t))} A(t)+\lambda E.
\end{equation}
We will show that, once the matrix measure of $P(t)$ in \eqref{3.0} is less than a constant negative number, for a positive $\lambda$ and under some other natural assumptions, \eqref{1.2} is globally exponentially stable,
and a solution estimate can be derived.

\begin{theorem}\label{theorem3.1}
Let $\lambda$ and $\beta$ be positive constants for which
\begin{equation}\label{3.1}
\mu (P(t))\leq -\beta, \quad t \geq t_0, \quad e^{\lambda\sigma}\|A\|_{[t_0,\infty)}<1,
\end{equation}
\begin{equation}
\label{3.2}
\begin{array}{ll} 
M_1:=
& \displaystyle  
\frac{\lambda +\sum\limits_{k=1}^m e^{\lambda\tau_k}\|B_k\|_{[t_0,\infty)}+\lambda e^{\lambda\sigma}\|A\|_{[t_0,\infty)}}{1-e^{\lambda\sigma}\|A\|_{[t_0,\infty)}} 
\vspace{2mm} \\ 
& \displaystyle  \times \left(\left\|\frac{A}{\mu (P)}\right\|_{[t_0,\infty)} \!\!\! \!\!\! \!\!\!\! (1+\lambda\sigma)e^{\lambda\sigma}
+\sum_{k=1}^m  \left\|\frac{ B_k}{\mu (P)}\right\|_{[t_0,\infty)} \!\!\!\!\!\!\!\!\!\! e^{\lambda\tau_k}\tau_k \right)<1,
\end{array}
\end{equation}
where $P$ is defined in \eqref{3.0}.
Then, for $M_0:=(1-M_1)^{-1}$, the solution of (\ref{2.1}),(\ref{2.2}) satisfies
\begin{equation}
\label{3.3}
\begin{array}{ll} \| x(t) \| \leq & \displaystyle M_0e^{-\lambda(t-t_0)}\left[  \| x(t_0)  \|
+\frac{e^{\lambda \sigma}-1}{\lambda(1-\|A\|_{[t_0,\infty)})} \|A\|_{[t_0,\infty)}\|\Psi\|_{[t_0-\sigma,t_0]}\right.
\vspace{2mm}
\\
& \displaystyle \left.+\sum_{k=1}^m\frac{e^{\lambda \tau_k}-1}{\lambda(1-\|A\|_{[t_0,\infty)})}
 \|B_k\|_{[t_0,\infty)}\|\Phi\|_{[t_0-\tau_k,t_0]}\right]+\frac{M_0}{\lambda(1-\|A\|_{[t_0,\infty)})}\|f\|_{[t_0,t]}. \end{array}
\end{equation}
\end{theorem}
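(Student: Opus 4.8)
The plan is to remove the exponential decay by the weight substitution $y(t)=e^{\lambda(t-t_0)}x(t)$, which turns the sought exponential bound for $x$ into a uniform bound for the fundamental matrix of a transformed system of the form \eqref{2.6}, to which Lemma~\ref{lemma2.1} applies. Using $x(h_k(t))=e^{-\lambda(h_k(t)-t_0)}y(h_k(t))$ and $\dot x(g(t))=e^{-\lambda(g(t)-t_0)}(\dot y(g(t))-\lambda y(g(t)))$ and multiplying \eqref{2.1} by $e^{\lambda(t-t_0)}$, I would rewrite the equation for $y$ as
\[
\dot y(t)-A_0(t)\dot y(g(t))=C(t)y(t)+\sum_{k=0}^m D_k(t)y(h_k(t))+e^{\lambda(t-t_0)}f(t),
\]
with $A_0(t)=e^{\lambda(t-g(t))}A(t)$, $C(t)=\lambda E$, and the delayed zero-order terms $h_0=g$, $\tau_0=\sigma$, $D_0(t)=-\lambda e^{\lambda(t-g(t))}A(t)$, $D_k(t)=e^{\lambda(t-h_k(t))}B_k(t)$ for $k\geq1$. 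The crucial point is the algebraic identity $C(t)+D(t)=P(t)$ with $P$ from \eqref{3.0} and $D$ from \eqref{D_def}, so that the hypothesis $\mu(P(t))\leq-\beta<0$ in \eqref{3.1} is exactly $\mu(C(t)+D(t))\leq-\beta$, while $e^{\lambda\sigma}\|A\|_{[t_0,\infty)}<1$ gives $\|A_0\|_{[t_0,\infty)}<1$.

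Next I would verify that the constant $K_0$ of \eqref{2.7} for this transformed system is dominated by $M_1$ of \eqref{3.2}. From $0\leq t-g(t)\leq\sigma$ and $0\leq t-h_k(t)\leq\tau_k$ one gets $\|C\|=\lambda$, $\|D_0\|_{[t_0,\infty)}\leq\lambda e^{\lambda\sigma}\|A\|_{[t_0,\infty)}$, $\|D_k\|_{[t_0,\infty)}\leq e^{\lambda\tau_k}\|B_k\|_{[t_0,\infty)}$ and $\|A_0\|_{[t_0,\infty)}\leq e^{\lambda\sigma}\|A\|_{[t_0,\infty)}$, so the numerator and the factor $1-\|A_0\|$ in \eqref{2.7} reproduce those of \eqref{3.2}. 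The two $A$-contributions $\|A_0/\mu(C+D)\|_{[t_0,\infty)}$ and $\tau_0\|D_0/\mu(C+D)\|_{[t_0,\infty)}$ combine, since $\tau_0=\sigma$, into $(1+\lambda\sigma)e^{\lambda\sigma}\|A/\mu(P)\|_{[t_0,\infty)}$, matching the $A$-term of \eqref{3.2}. Hence $K_0\leq M_1<1$ and Lemma~\ref{lemma2.1} yields $\|Z(t,s)\|\leq(1-K_0)^{-1}\leq(1-M_1)^{-1}=M_0$ for the fundamental matrix $Z$ of the homogeneous transformed system.

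To return to $X(t,s)$, I would note that $v(t)=e^{\lambda t}x(t)$ carries solutions of the homogeneous \eqref{1.2} to solutions of the homogeneous transformed system; since both fundamental matrices equal $E$ on the diagonal and vanish on the history, uniqueness gives $Z(t,s)=e^{\lambda(t-s)}X(t,s)$, whence $\|X(t,s)\|\leq M_0e^{-\lambda(t-s)}$ for $t\geq s\geq t_0$. Inserting this into the representation \eqref{2.4}, estimating $(I-S)^{-1}$ by \eqref{norm}, and bounding the terms separately then produces \eqref{3.3}: the term $X(t,t_0)x_0$ gives $M_0e^{-\lambda(t-t_0)}\|x(t_0)\|$; the $\Psi$- and $\Phi$-integrals, via $\int_{t_0}^{t_0+\sigma}e^{-\lambda(t-s)}ds=e^{-\lambda(t-t_0)}(e^{\lambda\sigma}-1)/\lambda$ and its $\tau_k$-analogue together with $\|\Psi(g(\cdot))\|\leq\|\Psi\|_{[t_0-\sigma,t_0]}$, give the decaying boundary terms; and the $f$-integral, using $\int_{t_0}^t e^{-\lambda(t-s)}ds\leq1/\lambda$, gives the constant bound $M_0\|f\|_{[t_0,t]}/(\lambda(1-\|A\|_{[t_0,\infty)}))$.

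I expect the main obstacle to be the bookkeeping of the first two steps, in particular splitting the neutral term correctly into the coefficient $A_0$ of $\dot y(g(t))$ and the separate delayed zero-order term $D_0$ of $y(g(t))$, and checking that the exponential weights $e^{\lambda\sigma}$, $e^{\lambda\tau_k}$ and the factor $(1+\lambda\sigma)$ assemble precisely into the constants of \eqref{3.2}. Once the identity $C+D=P$ and the inequality $K_0\leq M_1$ are established, the fundamental-matrix relation and the termwise estimates of \eqref{2.4} are routine.
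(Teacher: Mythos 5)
Your proposal is correct and follows essentially the same route as the paper: the substitution $x(t)=e^{-\lambda(t-t_0)}y(t)$ reducing \eqref{2.1} to the form \eqref{2.6} with $A_0(t)=e^{\lambda(t-g(t))}A(t)$, $C(t)=\lambda E$, $D_0(t)=-\lambda e^{\lambda(t-g(t))}A(t)$, $D_k(t)=e^{\lambda(t-h_k(t))}B_k(t)$, the identity $C+D=P$, the verification that the constant $K_0$ of Lemma~\ref{lemma2.1} is bounded by $M_1$, the fundamental-matrix relation $X(t,s)=e^{-\lambda(t-s)}Y(t,s)$, and the termwise estimate of representation \eqref{2.4} using \eqref{norm}. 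All constants, including the $(1+\lambda\sigma)e^{\lambda\sigma}$ factor and the integral evaluations, match the paper's proof.
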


\begin{proof}
We start with homogeneous system (\ref{1.2}).
Substituting $x(t)=e^{-\lambda(t-t_0)}y(t)$ into (\ref{1.2}), we obtain 
\begin{equation}\label{3.4}
\dot{y}(t)-A(t)e^{\lambda(t-g(t))}\dot{y}(g(t))=\lambda y(t)-\lambda e^{\lambda(t-g(t))}A(t)y(g(t))
+\sum_{k=1}^m e^{\lambda(t-h_k(t))}B_k(t)y(h_k(t)).
\end{equation}
Equation (\ref{3.4}) has the form of (\ref{2.6}) with 
$$
A_0(t)=e^{\lambda(t-g(t))} A(t), ~~C(t)=\lambda E, ~~D_0(t)=-\lambda e^{\lambda(t-g(t))} A(t),  ~~h_0(t)=g(t),
$$$$
D_k(t)=e^{\lambda(t-h_k(t))}B_k(t),~k=1,\dots,m,
~~ D(t)=\sum_{k=0}^m D_k(t). 
$$
Therefore $P(t)=C(t)+D(t)$,
$$
\left\|\frac{A_0}{\mu (C+D)}\right\|_{[t_0,\infty)}\leq e^{\lambda\sigma}\left\|\frac{A}{\mu (P)}\right\|_{[t_0,\infty)},
\left\|\frac{D_0}{\mu (C+D)}\right\|_{[t_0,\infty)}\leq \lambda e^{\lambda\sigma}\left\|\frac{A}{\mu (P)}\right\|_{[t_0,\infty)},
$$ $$
\left\|\frac{D_k}{\mu (C+D)}\right\|_{[t_0,\infty)}\leq e^{\lambda\tau_k}\left\|\frac{B_k}{\mu (P)}\right\|_{[t_0,\infty)}, k=1,\dots,m,
$$$$
\frac{\|C\|_{[t_0,\infty)}+\sum_{k=0}^m \|D_k\|_{[t_0,\infty)}}{1-\|A_0\|_{[t_0,\infty)}}\leq
\frac{\lambda+\sum_{k=1}^m e^{\lambda\tau_k}\|B_k\|_{[t_0,\infty)}+\lambda e^{\lambda\sigma}\|A\|_{[t_0,\infty)}}{1-e^{\lambda\sigma}\|A\|_{[t_0,\infty)}}.
$$
Inequalities (\ref{3.1}) and (\ref{3.2}) imply  the assumptions of Lemma~\ref{lemma2.1}, in particular, (\ref{2.7}).  
Let $Y(t,s)$ be a fundamental matrix of (\ref{3.4}), then we can apply Lemma~\ref{lemma2.1} to deduce $\|Y(t,s)\|\leq M_0$. Once $X(t,s)$ is 
a fundamental matrix  of system (\ref{1.2}), it satisfies $X(t,s)=e^{-\lambda(t-s)}Y(t,s)$.
This implies the exponential estimate $\|X(t,s)\| \leq M_0 e^{-\lambda(t-s)}$. 
Let $x$ be a solution of problem (\ref{1.2}),(\ref{2.2}).
We use solution representation  (\ref{2.4}) and inequality (\ref{norm})
\begin{align*}
 \|x(t)\|\leq & \|X(t,t_0)\|\|x_0\|
+  \left. \left. \int_{t_0}^{t_0+\sigma} \|X(t,s)\| \right\| (I-S)^{-1}  \right\|_{L_{\infty}[t_0,t_1] \to L_{\infty}[t_0,t_1]}  \|A(s)\|\|\Psi(g(s))\|ds
\\
& +\sum_{k=1}^m \left. \left. \int_{t_0}^{t_0+\tau_k} \|X(t,s)\| \right\| (I-S)^{-1}  \right\|_{L_{\infty}[t_0,t_1] \to L_{\infty}[t_0,t_1]}  \|B_k(s)\|\|\Phi(h_k(s))\|ds
\\
\leq  & M_0 e^{-\lambda(t-t_0)} \|x(t_0)\|+\frac{M_0}{\lambda\left(1-\|A\|_{[t_0,\infty)}\right)} 
 \|A\|_{[t_0,\infty)} \left( e^{-\lambda (t-t_0-\sigma)}-e^{-\lambda(t-t_0)} \right)\|\Psi\|_{[t_0-\sigma,t_0]}
\\
& +\frac{M_0}{\lambda\left(1-\|A\|_{[t_0,\infty)}\right)} 
\sum_{k=1}^m \|B_k\|_{[t_0,\infty)} \left( e^{-\lambda (t-t_0-\tau_k)}
-e^{-\lambda(t-t_0)} \right)\|\Phi\|_{[t_0-\tau_k,t_0]},
\end{align*}
which immediately yields (\ref{3.3}) when $f \equiv 0$.

For any $f$ in (\ref{2.1}), we employ the above inequality for $\|X(t,s)\|$ and 
\begin{align*}
\left\| \int_{t_0}^t X(t,s) [ (I-S)^{-1}f](s)\, ds \right\| 
\leq \frac{M_0}{\lambda\left(1-\|A\|_{[t_0,\infty)}\right)} \|f\|_{[t_0,t]}.
\end{align*}
\end{proof}

\begin{theorem}\label{theorem3.1a}
Let $\lambda$ and $\beta$ be positive constants for which
\begin{equation*}
\mu (P(t))\leq -\beta, \quad t \geq t_0, \quad  (1+\lambda  \sigma)e^{\lambda\sigma}\|A\|_{[t_0,\infty)}
+\sum_{k=1}^m \tau_k e^{\lambda\tau_k}\|B_k\|_{[t_0,\infty)}<1,
\end{equation*}
\begin{equation*}
\begin{array}{ll}	
M_2:= & 
\frac{\lambda +\sum\limits_{k=1}^m e^{\lambda\tau_k}\|B_k\|_{[t_0,\infty)}+\lambda e^{\lambda\sigma}\|A\|_{[t_0,\infty)}}
{1-(1+\lambda  \sigma)e^{\lambda\sigma}\|A\|_{[t_0,\infty)}
-\sum_{k=1}^m \tau_k e^{\lambda\tau_k}\|B_k\|_{[t_0,\infty)}} 
\vspace{2mm} \\ 
& \displaystyle  \times \left(\left\|\frac{A}{\mu (P)}\right\|_{[t_0,\infty)}  \!\! \!\!\!\! (1+\lambda\sigma)e^{\lambda\sigma}
+\sum_{k=1}^m  \left\|\frac{ B_k}{\mu (P)}\right\|_{[t_0,\infty)} \!\!\!\!\! e^{\lambda\tau_k}\tau_k \right)<1,
\end{array}
\end{equation*}
where $P$ is defined in \eqref{3.0}.
Then the solution $x$ of (\ref{2.1}),(\ref{2.2}) satisfies \eqref{3.3} with $M_0:=(1-M_2)^{-1}$. 
\end{theorem}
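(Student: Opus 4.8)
The plan is to run the proof of Theorem~\ref{theorem3.1} essentially verbatim, with the single change that the bound on the transformed fundamental matrix is extracted from Lemma~\ref{lemma2.2} rather than from Lemma~\ref{lemma2.1}; indeed, the hypotheses $M_1<1$ and $M_2<1$ of the two theorems are the images, under the same substitution, of the conditions $K_0<1$ and $L_0<1$ of the two lemmas.

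Concretely, I first perform the exponential substitution $x(t)=e^{-\lambda(t-t_0)}y(t)$, which turns the homogeneous part of \eqref{1.2} into \eqref{3.4}, an equation of the form \eqref{2.6} with the same identifications used in Theorem~\ref{theorem3.1}: $A_0(t)=e^{\lambda(t-g(t))}A(t)$, $C(t)=\lambda E$, $D_0(t)=-\lambda e^{\lambda(t-g(t))}A(t)$, $h_0(t)=g(t)$ (so that $\tau_0=\sigma$), and $D_k(t)=e^{\lambda(t-h_k(t))}B_k(t)$ for $1\leq k\leq m$. Then $P(t)=C(t)+D(t)$, so $\mu(C(t)+D(t))=\mu(P(t))\leq-\beta<0$ and the negativity hypothesis of Lemma~\ref{lemma2.2} holds with $\alpha_0=-\beta$.

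Next I verify the two remaining hypotheses of Lemma~\ref{lemma2.2}. Using $\tau_0=\sigma$ and the elementary bounds $\|A_0\|\leq e^{\lambda\sigma}\|A\|$, $\tau_0\|D_0\|\leq\sigma\lambda e^{\lambda\sigma}\|A\|$, and $\tau_k\|D_k\|\leq\tau_k e^{\lambda\tau_k}\|B_k\|$, the smallness requirement $\|A_0\|+\sum_{k=0}^m\tau_k\|D_k\|<1$ becomes exactly the second displayed inequality of the theorem. For the constant $L_0$ of Lemma~\ref{lemma2.2}, the estimates on $\|A_0/\mu(C+D)\|$ and $\|D_k/\mu(C+D)\|$ already recorded in the proof of Theorem~\ref{theorem3.1} show that the parenthesized factor of $L_0$ is dominated by the parenthesized factor of $M_2$.

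The only point requiring care is the first factor of $L_0$. Lemma~\ref{lemma2.2} carries the tighter numerator $\|C+D\|=\|P\|$, whereas the theorem states the larger, more explicit quantity $\|C\|+\sum_{k=0}^m\|D_k\|=\lambda+\sum_{k=1}^m e^{\lambda\tau_k}\|B_k\|+\lambda e^{\lambda\sigma}\|A\|$; since $\|P\|\leq\|C\|+\sum_{k=0}^m\|D_k\|$ by the triangle inequality, this numerator is only increased, while the delay bounds above make the denominator of $L_0$ no smaller than that of $M_2$. Hence $L_0\leq M_2<1$, Lemma~\ref{lemma2.2} applies, and the transformed fundamental matrix satisfies $\|Y(t,s)\|\leq(1-L_0)^{-1}\leq(1-M_2)^{-1}=M_0$, whence $\|X(t,s)\|\leq M_0e^{-\lambda(t-s)}$ for the fundamental matrix of \eqref{1.2}. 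From this exponential bound the solution estimate \eqref{3.3}, including the non-homogeneous term in $f$, is obtained by inserting \eqref{norm} into the representation \eqref{2.4} and integrating, exactly as in the closing part of the proof of Theorem~\ref{theorem3.1}.
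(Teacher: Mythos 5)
Your proof is correct and is precisely the argument the paper intends: the paper's own proof of Theorem~\ref{theorem3.1a} consists of a single sentence directing the reader to repeat the proof of Theorem~\ref{theorem3.1} with Lemma~\ref{lemma2.2} in place of Lemma~\ref{lemma2.1}, and that is exactly what you carry out. Your explicit verification that the theorem's hypotheses yield $L_0\leq M_2<1$ --- in particular the observation that the numerator $\|C+D\|$ of $L_0$ is dominated via the triangle inequality by $\lambda+\sum_{k=1}^m e^{\lambda\tau_k}\|B_k\|_{[t_0,\infty)}+\lambda e^{\lambda\sigma}\|A\|_{[t_0,\infty)}$ while the delay bounds keep the denominator no smaller than that of $M_2$ --- correctly fills in the details the paper leaves to the reader.
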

\begin{proof}
The proof of the theorem is similar to the proof of Theorem \ref{theorem3.1} 
if we use Lemma~\ref{lemma2.2} rather than Lemma~\ref{lemma2.1}, (\ref{3.4}) and the same notation.
\end{proof}

Let $m=1$, consider
\begin{equation}\label{3.5}
\dot{x}(t) - A(t)\dot{x}(g(t)) =B(t)x(h(t)) + f(t)
\end{equation}
with initial conditions \eqref{2.2}.
Denote 
\begin{equation*}
P_1(t):= e^{\lambda(t-h(t))}B(t)-\lambda e^{\lambda(t-g(t))} A(t)+\lambda E.
\end{equation*}

\begin{corollary}\label{corollary3.1}
Let $\lambda$ and $\beta$ be positive constants for which
\begin{equation}\label{3.6}
\mu (P_1(t))\leq -\beta, \quad t \geq t_0,\quad  e^{\lambda\sigma}\|A\|_{[t_0,\infty)}<1,
\end{equation}
\begin{equation}
\label{3.7}
M_3:=
\frac{\lambda +e^{\lambda\tau}\|B\|_{[t_0,\infty)}+\lambda e^{\lambda\sigma}\|A\|_{[t_0,\infty)}}{1-e^{\lambda\sigma}\|A\|_{[t_0,\infty)}} 
\left(\left\|\frac{A}{\mu (P_1)}\right\|_{[t_0,\infty)} \!\!\! \!\!\! \!\!\!\! (1+\lambda\sigma)e^{\lambda\sigma}
+ \left\|\frac{ B}{\mu (P_1)}\right\|_{[t_0,\infty)} \!\!\!\!\!\!\!\!\!\! e^{\lambda\tau}\tau \right)<1.
\end{equation}
Then for the solution $x$ of problem (\ref{3.5}),(\ref{2.2}) and $M_0:=(1-M_3)^{-1}$, we have 
\begin{equation}
\label{3.8}
\begin{array}{ll} \|x(t)\|\leq & \displaystyle M_0 e^{-\lambda(t-t_0)}\left[\|x(t_0)\|
+\frac{e^{\lambda \sigma}-1}{\lambda(1-\|A\|_{[t_0,\infty)})} \|A\|_{[t_0,\infty)}\| \Psi \|_{[t_0-\sigma,t_0]}\right.
\vspace{2mm}
\\
& \displaystyle \left.+\frac{e^{\lambda \tau}-1}{\lambda(1-\|A\|_{[t_0,\infty)})} 
\|B\|_{[t_0,\infty)}\|\Phi\|_{[t_0-\tau,t_0]}\right]+\frac{M_0}{\lambda(1-\|A\|_{[t_0,\infty)})}\|f\|_{[t_0,t]}. \end{array}
\end{equation}
\end{corollary}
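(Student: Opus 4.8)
The plan is to obtain Corollary~\ref{corollary3.1} as the single-delay instance of Theorem~\ref{theorem3.1}, so essentially no new work is required beyond matching notation. First I would set $m=1$ in system~\eqref{2.1} and rename the data accordingly: write $B_1=B$, $h_1=h$, and $\tau_1=\tau$. Under this identification the sum defining $P(t)$ in~\eqref{3.0} collapses to a single term, and I would check that it reproduces exactly $P_1(t)=e^{\lambda(t-h(t))}B(t)-\lambda e^{\lambda(t-g(t))}A(t)+\lambda E$, so that the hypothesis $\mu(P_1(t))\leq -\beta$ in~\eqref{3.6} is literally $\mu(P(t))\leq -\beta$.

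Next I would verify that the scalar-delay hypotheses coincide with those of Theorem~\ref{theorem3.1}. The smallness condition $e^{\lambda\sigma}\|A\|_{[t_0,\infty)}<1$ in~\eqref{3.6} is identical to the second inequality in~\eqref{3.1}, and the constant $M_3$ in~\eqref{3.7} is precisely the value of $M_1$ from~\eqref{3.2} once every sum $\sum_{k=1}^m$ is reduced to its $k=1$ term. In particular the prefactor becomes $\bigl(\lambda+e^{\lambda\tau}\|B\|_{[t_0,\infty)}+\lambda e^{\lambda\sigma}\|A\|_{[t_0,\infty)}\bigr)\big/\bigl(1-e^{\lambda\sigma}\|A\|_{[t_0,\infty)}\bigr)$ and the bracketed factor loses its sum, matching~\eqref{3.7} term by term. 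Thus $M_3<1$ is exactly the assumption $M_1<1$, and $M_0=(1-M_3)^{-1}$ is the same $M_0$ supplied by Theorem~\ref{theorem3.1}.

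With these identifications in place, Theorem~\ref{theorem3.1} applies verbatim, and its conclusion~\eqref{3.3} specializes to~\eqref{3.8}: the single summand over $k$ in~\eqref{3.3} becomes the one term involving $\|B\|_{[t_0,\infty)}\|\Phi\|_{[t_0-\tau,t_0]}$, while all remaining terms, namely the $\|x(t_0)\|$ term, the $\Psi$ term, and the forcing term $\|f\|_{[t_0,t]}$, are unchanged. Since the argument is a direct specialization, there is no substantive obstacle; the only care needed is bookkeeping, namely confirming that the $m=1$ reduction of $P$ and of $M_1$ agree symbol-for-symbol with $P_1$ and $M_3$, which is immediate from inspection.
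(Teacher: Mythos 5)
Your proposal is correct and matches the paper's intent exactly: the paper states Corollary~\ref{corollary3.1} without a separate proof, precisely because it is the $m=1$ specialization of Theorem~\ref{theorem3.1} with $B_1=B$, $h_1=h$, $\tau_1=\tau$, under which $P$ becomes $P_1$, \eqref{3.1}--\eqref{3.2} become \eqref{3.6}--\eqref{3.7}, and \eqref{3.3} becomes \eqref{3.8}. Your term-by-term bookkeeping confirms this identification, so nothing is missing.
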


Consider a delay system
\begin{equation}\label{3.9}
\dot{x}(t)=\sum_{k=1}^m B_k(t)x(h_k(t))+f(t), ~t\geq t_0; ~~x(t)=\Phi(t), ~t\leq t_0,
\end{equation}
which is an initial value problem  for  a particular case of (\ref{1.2}) without the neutral part.

Denote 
\begin{equation*}
\displaystyle P_2(t):=\sum_{k=1}^m e^{\lambda(t-h_k(t))}B_k(t)+\lambda E.
\end{equation*}

\begin{corollary}\label{corollary3.2}
Let $\lambda$ and $\beta$ be positive constants for which
$\mu (P_2(t))\leq -\beta$ for $t \geq t_0$ and
\begin{equation*}
M_4:=
\left(\lambda +\sum\limits_{k=1}^m e^{\lambda\tau_k}\|B_k\|_{[t_0,\infty)}\right) 
\sum_{k=1}^m  \left\|\frac{ B_k}{\mu (P_2)}\right\|_{[t_0,\infty)} \!\!\!\!\!\!\!\!\!\! e^{\lambda\tau_k}\tau_k <1.
\end{equation*}
Then, for $M_0:=(1-M_4)^{-1}$, the solution $x$ of problem (\ref{3.9}) satisfies
\begin{equation*}
 \|x(t)\|\leq \displaystyle M_0e^{-\lambda(t-t_0)}\left[\|x(t_0)\|
+\sum_{k=1}^m\frac{e^{\lambda \tau_k}-1}{\lambda} \|B_k\|_{[t_0,\infty)}\|\Phi\|_{[t_0-\tau_k,t_0]}\right]
+\frac{M_0}{\lambda}\|f\|_{[t_0,t]}.
\end{equation*}
\end{corollary}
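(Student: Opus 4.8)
The plan is to obtain Corollary~\ref{corollary3.2} as the degenerate case $A(t)\equiv 0$ of Theorem~\ref{theorem3.1}, since the delay problem \eqref{3.9} is exactly \eqref{2.1},\eqref{2.2} with the neutral coefficient switched off and no initial derivative datum $\Psi$. First I would embed \eqref{3.9} into \eqref{2.1} by setting $A\equiv 0$, after which the value of the neutral delay bound $\sigma$ is immaterial (every quantity carrying a factor $\|A\|_{[t_0,\infty)}=0$ drops out); in particular the constraint $e^{\lambda\sigma}\|A\|_{[t_0,\infty)}<1$ in \eqref{3.1} holds automatically. The matrix $P$ of \eqref{3.0} then collapses to $P(t)=\lambda E+\sum_{k=1}^m e^{\lambda(t-h_k(t))}B_k(t)=P_2(t)$, so the hypothesis $\mu(P(t))\le-\beta$ becomes precisely $\mu(P_2(t))\le-\beta$; moreover $\mu(P_2(t))\le-\beta<0$ keeps each quotient $\|B_k/\mu(P_2)\|_{[t_0,\infty)}$ finite.

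Next I would verify that the smallness constant $M_1$ of \eqref{3.2} reduces to $M_4$. With $\|A\|_{[t_0,\infty)}=0$ the denominator $1-e^{\lambda\sigma}\|A\|_{[t_0,\infty)}$ equals $1$, the summand $\lambda e^{\lambda\sigma}\|A\|_{[t_0,\infty)}$ in the numerator vanishes, and the factor $\|A/\mu(P)\|_{[t_0,\infty)}(1+\lambda\sigma)e^{\lambda\sigma}$ disappears, leaving
\[
M_1=\Bigl(\lambda+\sum_{k=1}^m e^{\lambda\tau_k}\|B_k\|_{[t_0,\infty)}\Bigr)\sum_{k=1}^m\Bigl\|\frac{B_k}{\mu(P_2)}\Bigr\|_{[t_0,\infty)}e^{\lambda\tau_k}\tau_k=M_4.
\]
Thus the assumption $M_4<1$ is identical to $M_1<1$ and $M_0=(1-M_4)^{-1}$ agrees with $(1-M_1)^{-1}$, so all hypotheses of Theorem~\ref{theorem3.1} are in force.

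It then remains to specialize the conclusion \eqref{3.3}. Putting $\|A\|_{[t_0,\infty)}=0$ turns $1-\|A\|_{[t_0,\infty)}$ into $1$ and annihilates the $\Psi$-contribution, whose prefactor is proportional to $\|A\|_{[t_0,\infty)}$ --- consistent with \eqref{3.9} carrying no datum $\Psi$ --- and what survives is exactly the asserted bound, the term $\frac{M_0}{\lambda}\|f\|_{[t_0,t]}$ arising from the nonhomogeneous part. If one prefers a self-contained derivation to invoking Theorem~\ref{theorem3.1}, I would instead substitute $x(t)=e^{-\lambda(t-t_0)}y(t)$ to cast \eqref{3.9} into the shape \eqref{3.4} with $A\equiv 0$, identify it with \eqref{2.6} (now $A_0\equiv 0$, $D_0\equiv 0$, $C=\lambda E$, and $D_k(t)=e^{\lambda(t-h_k(t))}B_k(t)$, whence $C+D=P_2$), apply Lemma~\ref{lemma2.1} to secure $\|Y(t,s)\|\le M_0$ and hence $\|X(t,s)\|\le M_0e^{-\lambda(t-s)}$, and finally insert this into representation \eqref{2.4}, which simplifies because $S\equiv 0$ forces $(I-S)^{-1}=I$ and deletes the $\Psi$-integral.

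I do not expect a genuine obstacle: the corollary is a bookkeeping specialization, not a new estimate. The only delicate point is the degenerate limit $A\equiv 0$, namely checking that \emph{every} $A$-dependent ingredient --- the constraint $e^{\lambda\sigma}\|A\|_{[t_0,\infty)}<1$, the three $A$-terms inside $M_1$, and the $\Psi$-term of \eqref{3.3} --- vanishes simultaneously and coherently, so that the reduced hypotheses and the reduced conclusion are precisely those recorded in Corollary~\ref{corollary3.2}.
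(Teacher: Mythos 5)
Your proposal is correct and matches the paper's intended derivation: the paper states Corollary~\ref{corollary3.2} without proof precisely because it is the specialization $A\equiv 0$ of Theorem~\ref{theorem3.1}, under which $P$ reduces to $P_2$, the constant $M_1$ in \eqref{3.2} reduces to $M_4$, and estimate \eqref{3.3} collapses (with $1-\|A\|_{[t_0,\infty)}=1$ and the $\Psi$-term annihilated) to the stated bound. Your checks of these reductions, including the automatic satisfaction of $e^{\lambda\sigma}\|A\|_{[t_0,\infty)}<1$, are exactly the bookkeeping the paper leaves to the reader.
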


Theorem \ref{theorem3.1a} leads to exponential estimates similar to those in Corollaries~\ref{corollary3.1} and \ref{corollary3.2}.

\subsection{Exponential stability}

Theorems \ref{theorem3.1} and \ref{theorem3.1a} immediately yield 
exponential stability conditions.

\begin{theorem}\label{theorem3.2}
Let $\beta$ be a positive constant for which $\displaystyle B(t):=\sum_{k=1}^m B_k(t), ~\mu (B(t))\leq -\beta,~\|A\|_{[t_0,\infty)}<1$ and
\begin{equation}\label{3.12a}
\frac{\sum_{k=1}^m \|B_k\|_{[t_0,\infty)}}{1-\|A\|_{[t_0,\infty)}} 
\left(\left\|\frac{A}{\mu (B)}\right\|_{[t_0,\infty)}+\sum_{k=1}^m \tau_k\left\|\frac{B_k}{\mu (B)}\right\|_{[t_0,\infty)}\right)<1.
\end{equation}
Then equation (\ref{1.2}) is uniformly exponentially stable.
\end{theorem}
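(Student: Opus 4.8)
The plan is to derive Theorem~\ref{theorem3.2} from Theorem~\ref{theorem3.1} by passing to the limit $\lambda\to 0^+$. The guiding observation is that the hypotheses of Theorem~\ref{theorem3.2} are precisely what the hypotheses of Theorem~\ref{theorem3.1} degenerate into in this limit: with $P$ from \eqref{3.0}, letting $\lambda\to 0^+$ gives $P(t)\to B(t)$, the condition $e^{\lambda\sigma}\|A\|_{[t_0,\infty)}<1$ becomes $\|A\|_{[t_0,\infty)}<1$, and the left-hand side of \eqref{3.2} (that is, $M_1$) tends to the left-hand side of \eqref{3.12a}. Since every inequality in Theorem~\ref{theorem3.2} is strict, there is room to select a small positive $\lambda$ for which the corresponding inequalities of Theorem~\ref{theorem3.1} still hold, and then to invoke estimate \eqref{3.3}.

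First I would make the convergence $P(t)\to B(t)$ quantitative. Using $0\leq t-h_k(t)\leq\tau_k$ and $0\leq t-g(t)\leq\sigma$ together with the boundedness of $A$ and $B_k$, one bounds
$$
\|P(t)-B(t)\|\leq\sum_{k=1}^m\bigl(e^{\lambda\tau_k}-1\bigr)\|B_k\|_{[t_0,\infty)}+\lambda e^{\lambda\sigma}\|A\|_{[t_0,\infty)}+\lambda
$$
uniformly in $t$, and the right-hand side tends to $0$ as $\lambda\to0^+$. The matrix measure is Lipschitz, $|\mu(C_1)-\mu(C_2)|\leq\|C_1-C_2\|$ (a consequence of subadditivity and $|\mu(C)|\leq\|C\|$), whence $\mu(P(t))\leq\mu(B(t))+\|P(t)-B(t)\|\leq-\beta+\|P(t)-B(t)\|$. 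Thus for $\lambda$ small the first part of \eqref{3.1} holds with, say, $\beta/2$ in place of $\beta$, uniformly in $t$; and the second part holds since $\|A\|_{[t_0,\infty)}<1$ forces $e^{\lambda\sigma}\|A\|_{[t_0,\infty)}<1$ for small $\lambda$.

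Next I would verify $M_1<1$. The uniform bound $\mu(P(t))\leq-\beta/2$ yields $|\mu(P(t))|\geq\beta/2$, so the quotients $\|A/\mu(P)\|_{[t_0,\infty)}$ and $\|B_k/\mu(P)\|_{[t_0,\infty)}$ are finite; using $|1/\mu(P(t))-1/\mu(B(t))|\leq\|P(t)-B(t)\|/(|\mu(P(t))|\,|\mu(B(t))|)$ with the uniform lower bounds on both denominators, these converge to $\|A/\mu(B)\|_{[t_0,\infty)}$ and $\|B_k/\mu(B)\|_{[t_0,\infty)}$ as $\lambda\to0^+$. Combined with the elementary limits of the scalar prefactors ($e^{\lambda\tau_k},e^{\lambda\sigma},(1+\lambda\sigma)\to1$, $\lambda\to0$), this gives $M_1\to$ (left-hand side of \eqref{3.12a}) $<1$, so $M_1<1$ for all sufficiently small $\lambda>0$. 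Fixing one such $\lambda$, all hypotheses of Theorem~\ref{theorem3.1} are satisfied.

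Finally, applying Theorem~\ref{theorem3.1} with $f\equiv0$ to \eqref{1.2},\eqref{2.2} gives \eqref{3.3}, namely $\|x(t)\|\leq M_0e^{-\lambda(t-t_0)}[\,\|x(t_0)\|+c_\Psi\|\Psi\|_{[t_0-\sigma,t_0]}+\sum_k c_k\|\Phi\|_{[t_0-\tau_k,t_0]}\,]$ with constant coefficients $c_\Psi,c_k$ depending only on $\lambda,\sigma,\tau_k,\|A\|,\|B_k\|$. Absorbing these coefficients and $M_0$ into a single constant $M$, and taking $\gamma:=\lambda$, produces exactly the exponential estimate in the definition of uniform exponential stability. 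The one point needing care is uniformity in $t_0$: since $\|\cdot\|_{[t_0,\infty)}\leq\|\cdot\|_{[0,\infty)}$, one should carry out the selection of $\lambda$ using the $[0,\infty)$-norms, so that $M$ and $\gamma$ are independent of $t_0$ and of the initial functions $\Phi,\Psi$, as the definition requires. I expect this $t_0$-uniformity, rather than the limit computation itself, to be the only genuinely delicate step.
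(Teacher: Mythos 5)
Your proposal is correct and is precisely the argument the paper intends: the paper offers no written proof, stating only that Theorems~\ref{theorem3.1} and \ref{theorem3.1a} ``immediately yield'' the stability conditions, and the intended derivation is exactly your small-$\lambda$ perturbation of the hypotheses \eqref{3.1}--\eqref{3.2}, with the strict inequalities in \eqref{3.12a} providing the room to fix $\lambda>0$. Your filling-in of the quantitative details (the Lipschitz bound $|\mu(P(t))-\mu(B(t))|\leq\|P(t)-B(t)\|$, the convergence of the quotient norms, and the monotonicity of the $[s,\infty)$-norms guaranteeing uniformity in the initial point) is sound.
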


\begin{theorem}\label{theorem3.2a}
Let $\beta$ be a positive constant for which $\mu (B(t))\leq - \beta$, where
$$
\displaystyle B(t):=\sum_{k=1}^m B_k(t), ~
~\|A\|_{[t_0,\infty)}+\sum_{k=1}^m \tau_k\|B_k\|_{[t_0,\infty)}<1
$$
and
\begin{equation*}
\left(\frac{\|B\|_{[t_0,\infty)}}{1-\|A\|_{[t_0,\infty)}-\sum_{k=1}^m \tau_k\|B_k\|_{[t_0,\infty)}} \right)
\left(\left\|\frac{A}{\mu (B)}\right\|_{[t_0,\infty)}+\sum_{k=1}^m \tau_k\left\|\frac{B_k}{\mu (B)}\right\|_{[t_0,\infty)}\right)<1.
\end{equation*} 
Then equation (\ref{1.2}) is uniformly exponentially stable.
\end{theorem}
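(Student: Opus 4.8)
The plan is to mirror how Theorem~\ref{theorem3.2} is obtained from Theorem~\ref{theorem3.1}, but with Lemma~\ref{lemma2.2} in place of Lemma~\ref{lemma2.1}: I read the hypotheses as the $\lambda\to0^+$ limits of the conditions of Theorem~\ref{theorem3.1a} and produce a positive exponential rate. First I would apply the substitution $x(t)=e^{-\lambda(t-t_0)}y(t)$ from the proof of Theorem~\ref{theorem3.1}, turning \eqref{1.2} into \eqref{3.4}, which is of the form \eqref{2.6} with $A_0(t)=e^{\lambda(t-g(t))}A(t)$, $C(t)=\lambda E$, $D_0(t)=-\lambda e^{\lambda(t-g(t))}A(t)$, $D_k(t)=e^{\lambda(t-h_k(t))}B_k(t)$, and $C+D=P$ from \eqref{3.0}. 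The goal is to verify the hypotheses of Lemma~\ref{lemma2.2} for all sufficiently small $\lambda>0$; this gives $\|Z(t,s)\|\le L$ for \eqref{3.4}, hence $\|X(t,s)\|\le Le^{-\lambda(t-s)}$ for \eqref{1.2}, and together with representation \eqref{2.4} (taking $f\equiv0$) the estimate \eqref{3.3} that defines uniform exponential stability.

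Next I would establish the relevant limits as $\lambda\to0^+$, uniformly in $t\ge t_0$. Since $0\le t-h_k(t)\le\tau_k$ and $0\le t-g(t)\le\sigma$, one has $\|P(t)-B(t)\|\le\sum_{k=1}^m(e^{\lambda\tau_k}-1)\|B_k\|_{[t_0,\infty)}+\lambda e^{\lambda\sigma}\|A\|_{[t_0,\infty)}+\lambda$, so $P\to B$ uniformly and $\|P\|_{[t_0,\infty)}\to\|B\|_{[t_0,\infty)}$. Using $|\mu(C_1)-\mu(C_2)|\le\|C_1-C_2\|$, which follows from the subadditivity and $|\mu(\cdot)|\le\|\cdot\|$ recorded in Section~2, also $\mu(P(t))\to\mu(B(t))$ uniformly; with $\mu(B(t))\le-\beta<0$ this keeps $|\mu(P(t))|$ bounded below, so $\|A/\mu(P)\|_{[t_0,\infty)}\to\|A/\mu(B)\|_{[t_0,\infty)}$ and $\|B_k/\mu(P)\|_{[t_0,\infty)}\to\|B_k/\mu(B)\|_{[t_0,\infty)}$.

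The key point is that the constant $L_0$ of Lemma~\ref{lemma2.2}, whose numerator is $\|C+D\|_{[t_0,\infty)}=\|P\|_{[t_0,\infty)}$, converges as $\lambda\to0^+$ to exactly the left-hand side of the displayed main inequality of the theorem, crucially with the sharp factor $\|B\|_{[t_0,\infty)}$ rather than the $\sum_{k=1}^m\|B_k\|_{[t_0,\infty)}$ that the $M_2$ of Theorem~\ref{theorem3.1a} would produce; this is why I run the argument through Lemma~\ref{lemma2.2} directly instead of quoting Theorem~\ref{theorem3.1a} verbatim. The auxiliary condition $\|A_0\|_{[t_0,\infty)}+\sum_{k=0}^m\tau_k\|D_k\|_{[t_0,\infty)}\le(1+\lambda\sigma)e^{\lambda\sigma}\|A\|_{[t_0,\infty)}+\sum_{k=1}^m\tau_k e^{\lambda\tau_k}\|B_k\|_{[t_0,\infty)}$ tends to $\|A\|_{[t_0,\infty)}+\sum_{k=1}^m\tau_k\|B_k\|_{[t_0,\infty)}<1$. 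Both target quantities are strictly below $1$, so by continuity there is $\lambda>0$ with $L_0<1$, $\|A_0\|_{[t_0,\infty)}+\sum_{k}\tau_k\|D_k\|_{[t_0,\infty)}<1$, and $\mu(P)\le-\beta/2<0$, and Lemma~\ref{lemma2.2} applies.

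Finally I would secure uniformity in $t_0$, which is what \emph{uniform} exponential stability demands. Because $[t_0,\infty)\subseteq[0,\infty)$, each $\sup$-norm over $[t_0,\infty)$ is non-increasing in $t_0$, while $\mu(B(t))\le-\beta$ holds for all $t\ge0$; hence the whole left-hand side of the main inequality and the auxiliary sum are monotone non-increasing in $t_0$, so a single $\lambda>0$ and a single bound $L=(1-L_0)^{-1}$ work for every $t_0\ge0$, yielding $M$ and $\gamma=\lambda$ independent of $t_0$ and of $\Phi,\Psi$. I expect the main obstacle to be precisely this sharp-limit bookkeeping: one must carry $\|C+D\|=\|P\|$ (not its triangle-inequality upper bound) through Lemma~\ref{lemma2.2} so the limiting constant is governed by $\|B\|$ and matches the hypothesis, and one must check that all convergences are uniform in both $t$ and $t_0$; the substitution and the application of the lemma are otherwise routine.
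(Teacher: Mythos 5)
Your proof is correct, and it follows the mechanism the paper intends: the paper gives no separate proof of Theorem~\ref{theorem3.2a} beyond the remark that Theorems~\ref{theorem3.1} and \ref{theorem3.1a} ``immediately yield'' the stability tests, and Theorem~\ref{theorem3.1a} is itself proved via the substitution $x(t)=e^{-\lambda(t-t_0)}y(t)$, equation \eqref{3.4}, and Lemma~\ref{lemma2.2}. Your one genuine addition --- and it is a necessary one --- is the observation that the derivation cannot literally pass through the \emph{statement} of Theorem~\ref{theorem3.1a}: the constant $M_2$ there bounds $\|P\|_{[t_0,\infty)}$ by the triangle inequality, so its $\lambda\to 0^+$ limit has $\sum_{k=1}^m\|B_k\|_{[t_0,\infty)}$ in the numerator, whereas Theorem~\ref{theorem3.2a} asserts the sharper factor $\|B\|_{[t_0,\infty)}$; since $\|B\|_{[t_0,\infty)}\leq\sum_{k=1}^m\|B_k\|_{[t_0,\infty)}$, the hypothesis of Theorem~\ref{theorem3.2a} does not imply $M_2<1$ for small $\lambda$, so quoting Theorem~\ref{theorem3.1a} verbatim would prove only a weaker result. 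Running the argument through Lemma~\ref{lemma2.2} directly, whose constant $L_0$ carries the exact norm $\|C+D\|_{[t_0,\infty)}=\|P\|_{[t_0,\infty)}\to\|B\|_{[t_0,\infty)}$, is precisely what is needed, and your supporting estimates are sound: the uniform convergence $\|P(t)-B(t)\|\to 0$, the Lipschitz property $|\mu(C_1)-\mu(C_2)|\leq\|C_1-C_2\|$ (which does follow from subadditivity and $|\mu(\cdot)|\leq\|\cdot\|$), the resulting lower bound on $|\mu(P(t))|$, the limit of the auxiliary condition to $\|A\|_{[t_0,\infty)}+\sum_k\tau_k\|B_k\|_{[t_0,\infty)}<1$, and the monotonicity of the sup-norms in $t_0$ that secures uniformity of $M$ and $\gamma=\lambda$. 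In short, your proof both matches the paper's intended argument and repairs the small gap hidden in its ``immediately yields'' claim.
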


For $C=\{c_{ij}\}_{i,j=1}^n$, denote the matrix $|C|:=\{|c_{ij}|\}_{i,j=1}^n$.
If $|B(t)|\leq \overline{B}$ for any $t \in [t_0,\infty)$, where $\overline{B}$ is a constant matrix, then $\|B \|_{[t_0,\infty)} \leq \| \overline{B} \|$.

\begin{corollary}\label{corollary3.3a}
Let $\beta$ be a positive constant for which $\mu (B(t))\leq - \beta$, where $\displaystyle B(t):=\sum_{k=1}^m B_k(t)$,
$|A(t)|\leq \overline{A}, |B_k(t)|\leq \overline{B_k}, |B(t)|\leq \overline{B}, \|\overline{A}\|<1$, and at least one of the following conditions holds:
$$
\sum_{k=1}^m \|\overline{B_k}\|\left(\|\overline{A}\|+\sum_{k=1}^m \tau_k\|\overline{B_k}\|\right)<\beta (1-\|\overline{A}\|);
$$
$$
  \|\overline{B}\|\left(\|\overline{A}\|+\sum_{k=1}^m \tau_k\|\overline{B_k}\|\right)
	<\beta\left(1-\|\overline{A}\|-\sum_{k=1}^m \tau_k\|\overline{B_k}\|\right).
$$
Then equation (\ref{1.2}) is uniformly exponentially stable.
\end{corollary}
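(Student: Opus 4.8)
The plan is to reduce the two stated conditions directly to Theorems~\ref{theorem3.2} and \ref{theorem3.2a}, respectively, by replacing each time-varying norm appearing in those theorems with its constant upper bound. Two ingredients suffice. First, the monotonicity of the norm under componentwise domination, recorded just above the statement: from $|A(t)|\leq\overline{A}$, $|B_k(t)|\leq\overline{B_k}$ and $|B(t)|\leq\overline{B}$ one gets $\|A\|_{[t_0,\infty)}\leq\|\overline{A}\|$, $\|B_k\|_{[t_0,\infty)}\leq\|\overline{B_k}\|$ and $\|B\|_{[t_0,\infty)}\leq\|\overline{B}\|$. Second, the sign bound $\mu(B(t))\leq-\beta<0$, which yields $|\mu(B(t))|\geq\beta$ and hence
$$
\left\|\frac{A}{\mu(B)}\right\|_{[t_0,\infty)}\leq\frac{\|\overline{A}\|}{\beta},\qquad
\left\|\frac{B_k}{\mu(B)}\right\|_{[t_0,\infty)}\leq\frac{\|\overline{B_k}\|}{\beta}.
$$

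For the first condition I would invoke Theorem~\ref{theorem3.2}. Since $\|\overline{A}\|<1$ gives $1-\|A\|_{[t_0,\infty)}\geq1-\|\overline{A}\|>0$, the factor $\frac{\sum_{k}\|B_k\|_{[t_0,\infty)}}{1-\|A\|_{[t_0,\infty)}}$ is bounded above by $\frac{\sum_k\|\overline{B_k}\|}{1-\|\overline{A}\|}$, while the bracketed factor in \eqref{3.12a} is bounded by $\frac{1}{\beta}\big(\|\overline{A}\|+\sum_k\tau_k\|\overline{B_k}\|\big)$. Multiplying, the left-hand side of \eqref{3.12a} is at most
$$
\frac{\sum_{k=1}^m\|\overline{B_k}\|}{\beta(1-\|\overline{A}\|)}\Big(\|\overline{A}\|+\sum_{k=1}^m\tau_k\|\overline{B_k}\|\Big),
$$
and the first displayed inequality of the corollary says precisely that this quantity is less than $1$. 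Together with $\mu(B(t))\leq-\beta$ and $\|A\|_{[t_0,\infty)}\leq\|\overline{A}\|<1$, all hypotheses of Theorem~\ref{theorem3.2} hold, so \eqref{1.2} is uniformly exponentially stable.

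For the second condition I would argue analogously using Theorem~\ref{theorem3.2a}. Here the second displayed inequality forces $1-\|\overline{A}\|-\sum_k\tau_k\|\overline{B_k}\|>0$, which in turn gives $\|A\|_{[t_0,\infty)}+\sum_k\tau_k\|B_k\|_{[t_0,\infty)}\leq\|\overline{A}\|+\sum_k\tau_k\|\overline{B_k}\|<1$, verifying the first hypothesis of that theorem; it also makes the denominator $1-\|A\|_{[t_0,\infty)}-\sum_k\tau_k\|B_k\|_{[t_0,\infty)}$ positive and no smaller than $1-\|\overline{A}\|-\sum_k\tau_k\|\overline{B_k}\|$. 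Bounding the remaining norms as above, the left-hand side of the condition in Theorem~\ref{theorem3.2a} does not exceed
$$
\frac{\|\overline{B}\|}{\beta\big(1-\|\overline{A}\|-\sum_{k=1}^m\tau_k\|\overline{B_k}\|\big)}\Big(\|\overline{A}\|+\sum_{k=1}^m\tau_k\|\overline{B_k}\|\Big),
$$
which is less than $1$ exactly by the second inequality of the corollary, and uniform exponential stability follows.

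The only point requiring care, and the closest thing to an obstacle, is keeping track of inequality directions in the monotone substitutions: each replacement must \emph{enlarge} the left-hand side of the respective stability criterion, so that verifying the simplified constant-matrix inequality still guarantees the sharper time-varying one. Because $\|\overline{A}\|<1$ (respectively $1-\|\overline{A}\|-\sum_k\tau_k\|\overline{B_k}\|>0$) keeps every denominator strictly positive, all substitutions go in the correct direction, and no additional estimation is needed.
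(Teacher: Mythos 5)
Your proposal is correct and follows exactly the route the paper intends: the corollary is stated immediately after Theorems~\ref{theorem3.2} and \ref{theorem3.2a} together with the remark that $|B(t)|\leq \overline{B}$ implies $\|B\|_{[t_0,\infty)}\leq\|\overline{B}\|$, and the paper's (implicit) proof is precisely your substitution of constant upper bounds, together with $|\mu(B(t))|\geq\beta$, into the hypotheses of those two theorems. Your additional check that the second displayed inequality forces $1-\|\overline{A}\|-\sum_k\tau_k\|\overline{B_k}\|>0$, so that all monotone replacements enlarge the left-hand sides, is exactly the care the argument requires.
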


\begin{corollary}\label{corollary3.3}
Let $\beta$ be a positive constant for which $\mu (B(t))\leq - \beta$ for any $t \geq t_0$, $\|A\|_{[t_0,\infty)}<1$, 
$$
\left(\left\|\frac{A}{\mu (B)}\right\|_{[t_0,\infty)}+\tau\left\|\frac{B}{\mu (B)}\right\|_{[t_0,\infty)}\right)\|B\|_{[t_0,\infty)}<1-\|A\|_{[t_0,\infty)}.
$$
Then equation (\ref{3.5}) with $f\equiv 0$ is uniformly exponentially stable.
\end{corollary}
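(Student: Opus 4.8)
The plan is to obtain Corollary~\ref{corollary3.3} as the single-delay instance of Theorem~\ref{theorem3.2}, so that no separate argument is required. Setting $m=1$ in \eqref{1.2} and writing $B_1=B$, $h_1=h$, $\tau_1=\tau$ turns the homogeneous system into equation~\eqref{3.5} with $f\equiv 0$. In this case the matrix $B(t):=\sum_{k=1}^m B_k(t)$ that appears in Theorem~\ref{theorem3.2} coincides with the single coefficient $B(t)$ of \eqref{3.5}, so the standing hypotheses $\mu(B(t))\le-\beta$ and $\|A\|_{[t_0,\infty)}<1$ transfer verbatim.

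The one point that requires checking is that the smallness condition assumed here is exactly \eqref{3.12a} specialized to $m=1$. When $m=1$ the sums in \eqref{3.12a} collapse to single terms, giving
\[
\frac{\|B\|_{[t_0,\infty)}}{1-\|A\|_{[t_0,\infty)}}
\left(\left\|\frac{A}{\mu(B)}\right\|_{[t_0,\infty)}+\tau\left\|\frac{B}{\mu(B)}\right\|_{[t_0,\infty)}\right)<1.
\]
Since $1-\|A\|_{[t_0,\infty)}>0$ by the first hypothesis, multiplying through by this positive quantity produces precisely the inequality displayed in the statement of the corollary. Hence all three hypotheses of Theorem~\ref{theorem3.2} hold for the present data, and uniform exponential stability of \eqref{3.5} with $f\equiv 0$ follows at once.

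I do not expect a genuine obstacle: the entire content is the observation that the corollary's inequality is \eqref{3.12a} with the positive denominator cleared. If one preferred to argue independently, the only nontrivial step would be the one already carried out inside Theorem~\ref{theorem3.2}, namely verifying that the $\lambda$-dependent conditions \eqref{3.1}--\eqref{3.2} of Theorem~\ref{theorem3.1} survive for some sufficiently small $\lambda>0$. That rests on the continuity in $\lambda$ of $\mu(P(t))$ together with $P(t)\to B(t)$ as $\lambda\to0^{+}$, uniformly in $t$ because the delays are bounded, so that $\mu(P(t))$ stays negative and bounded away from zero and $M_1<1$ persists in a right neighbourhood of $\lambda=0$; the estimate \eqref{3.3} then delivers exponential decay at rate $\lambda$. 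Since Theorem~\ref{theorem3.2} is available, invoking it with $m=1$ is the economical route.
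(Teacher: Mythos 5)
Your proposal is correct and matches the paper's own route: the paper states Corollary~\ref{corollary3.3} without a separate proof precisely because it is Theorem~\ref{theorem3.2} specialized to $m=1$ (so $B_1=B$, $\tau_1=\tau$, and the sum $B(t)=\sum_k B_k(t)$ reduces to the single coefficient), with condition \eqref{3.12a} rewritten by clearing the positive denominator $1-\|A\|_{[t_0,\infty)}$. Your additional remark about how Theorem~\ref{theorem3.2} itself follows from Theorem~\ref{theorem3.1} for small $\lambda>0$ is consistent with the paper's structure but not needed for this corollary.
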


\begin{corollary}\label{corollary3.4}
Let $\beta$ be a positive constant for which $\mu (B(t))\leq - \beta$ for any $t \geq t_0$, where $\displaystyle B(t):=\sum_{k=1}^m B_k(t)$, and either
\\
\vspace{2mm}
1) $\displaystyle 
\sum_{k=1}^m \|B_k\|_{[t_0,\infty)} 
\sum_{k=1}^m \tau_k\left\|\frac{B_k}{\mu (B)}\right\|_{[t_0,\infty)}
<1$
\\
\vspace{2mm}
or
\\
\vspace{2mm}
2) $\displaystyle
\|B\|_{[t_0,\infty)} 
\sum_{k=1}^m \tau_k\left\|\frac{B_k}{\mu (B)}\right\|_{[t_0,\infty)}  
< 1- 
\sum_{k=1}^m \tau_k\left\|B_k\right\|_{[t_0,\infty)}$ 
\\
\vspace{2mm}
is satisfied.
Then, system (\ref{3.9}) with $f\equiv 0$ is uniformly exponentially stable.
\end{corollary}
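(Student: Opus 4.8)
The plan is to observe that system \eqref{3.9} with $f\equiv 0$ is exactly the special case of the neutral system \eqref{1.2} in which $A(t)\equiv 0$, and then to specialize Theorems~\ref{theorem3.2} and \ref{theorem3.2a} to this situation. Under $A\equiv 0$ one has $\|A\|_{[t_0,\infty)}=0$ and $\|A/\mu(B)\|_{[t_0,\infty)}=0$, so the hypotheses attached to the neutral term trivialize and each structural inequality of the two theorems collapses onto one of the two displayed alternatives. I would establish alternative 1) via Theorem~\ref{theorem3.2} and alternative 2) via Theorem~\ref{theorem3.2a}.

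For alternative 1), I would substitute $A\equiv 0$ into the hypotheses of Theorem~\ref{theorem3.2}. The requirement $\|A\|_{[t_0,\infty)}<1$ becomes $0<1$ and holds automatically; in the product inequality \eqref{3.12a} the factor $1-\|A\|_{[t_0,\infty)}$ equals $1$ and the term $\|A/\mu(B)\|_{[t_0,\infty)}$ vanishes, so \eqref{3.12a} reduces verbatim to condition 1). Theorem~\ref{theorem3.2} then delivers uniform exponential stability.

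For alternative 2), I would carry out the same substitution in Theorem~\ref{theorem3.2a}. With $A\equiv 0$ the first hypothesis becomes $\sum_{k=1}^m \tau_k\|B_k\|_{[t_0,\infty)}<1$, while the main inequality, after deleting the vanished term $\|A/\mu(B)\|_{[t_0,\infty)}$ and reducing its denominator to $1-\sum_{k=1}^m \tau_k\|B_k\|_{[t_0,\infty)}$, rearranges (clearing the positive denominator) exactly to condition 2). The one bookkeeping point I would verify is that condition 2) alone already forces $\sum_{k=1}^m \tau_k\|B_k\|_{[t_0,\infty)}<1$: its left-hand side is a product of nonnegative quantities, so the right-hand side $1-\sum_{k=1}^m \tau_k\|B_k\|_{[t_0,\infty)}$ must be strictly positive, which recovers for free the first hypothesis of Theorem~\ref{theorem3.2a}.

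Since the argument is a pure specialization, I do not expect any genuine obstacle; the only care required is the rearrangement in alternative 2) and the verification just described that a single inequality of the corollary encodes both hypotheses of Theorem~\ref{theorem3.2a}. Once each alternative has been matched to a theorem whose conclusion is uniform exponential stability of \eqref{1.2}, the corollary for \eqref{3.9} with $f\equiv 0$ and $A\equiv 0$ follows.
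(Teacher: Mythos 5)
Your proposal is correct and is precisely the argument the paper intends: Corollary~\ref{corollary3.4} is stated as an immediate specialization of Theorems~\ref{theorem3.2} and \ref{theorem3.2a} to the case $A\equiv 0$, with alternative 1) coming from \eqref{3.12a} and alternative 2) from the rearranged inequality of Theorem~\ref{theorem3.2a}. Your extra bookkeeping observation, that condition 2) already forces $\sum_{k=1}^m \tau_k\|B_k\|_{[t_0,\infty)}<1$ because its left-hand side is nonnegative, correctly closes the only point the paper leaves implicit.
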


Consider the system
\begin{equation}\label{3.13}
\dot{x}(t)=B(t)x(h(t)).
\end{equation}

\begin{corollary}\label{corollary3.5}
Let $\beta$ be a positive constant for which $\mu (B(t))\leq - \beta$ for any $t \geq t_0$, and
$$\displaystyle 
\tau\left\|\frac{B}{\mu (B)}\right\|_{[t_0,\infty)}  \!\!\!\! \|B\|_{[t_0,\infty)}<1.
$$
Then system (\ref{3.13}) is uniformly exponentially stable.
\end{corollary}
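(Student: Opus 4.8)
The plan is to recognize \eqref{3.13} as the simplest special case of the delay systems already analyzed and to specialize one of the preceding stability criteria to it. Indeed, system \eqref{3.13} is the homogeneous system \eqref{3.9} with a single delay: $f \equiv 0$, $m = 1$, $B_1 = B$, $h_1 = h$, and $\tau_1 = \tau$. Equivalently, it is the neutral system \eqref{1.2} with $A \equiv 0$ and $m = 1$. Thus no new machinery is needed; the task is merely to check that the general hypotheses collapse to those stated here.

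First I would verify the hypotheses of Corollary~\ref{corollary3.4} (case 1). With a single summand, the aggregate $B(t) := \sum_{k=1}^{1} B_k(t)$ coincides with the coefficient $B(t)$ of \eqref{3.13}, so the assumption $\mu(B(t)) \leq -\beta < 0$ transfers verbatim. Condition~1) of Corollary~\ref{corollary3.4}, namely $\sum_{k=1}^m \|B_k\|_{[t_0,\infty)} \sum_{k=1}^m \tau_k \|B_k/\mu(B)\|_{[t_0,\infty)} < 1$, reduces for $m = 1$ to precisely
$$
\tau \left\| \frac{B}{\mu(B)} \right\|_{[t_0,\infty)} \|B\|_{[t_0,\infty)} < 1,
$$
which is the hypothesis assumed here. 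Corollary~\ref{corollary3.4} then delivers uniform exponential stability of \eqref{3.13}.

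Alternatively, one may descend directly from Theorem~\ref{theorem3.2}: setting $A \equiv 0$ and $m = 1$, the requirement $\|A\|_{[t_0,\infty)} < 1$ holds trivially, the term $\left\|A/\mu(B)\right\|_{[t_0,\infty)}$ in \eqref{3.12a} drops out, and \eqref{3.12a} becomes the stated inequality. Since there is no computational content beyond matching notation, the only point requiring attention is this specialization; there is no genuine obstacle, as the exponential bound $\|X(t,s)\| \leq M_0 e^{-\lambda(t-s)}$ underpinning exponential stability is already produced in Theorem~\ref{theorem3.1} and propagated through Theorem~\ref{theorem3.2}.
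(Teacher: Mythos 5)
Your proposal is correct and matches the paper's (implicit) route: the paper states Corollary~\ref{corollary3.5} without a separate proof precisely because it is the specialization of Corollary~\ref{corollary3.4} (equivalently, of Theorem~\ref{theorem3.2} with $A\equiv 0$, $m=1$) to the single-delay system \eqref{3.13}, and your verification that condition~1) of Corollary~\ref{corollary3.4} collapses to the stated inequality is exactly the needed check. Nothing is missing.
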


\section{Examples and Discussion}

First, let us notice that Corollary \ref{corollary3.5} extends the stability test of Proposition~\ref{p3} to a system with a variable delay.

Next, to compare stability tests of the present paper with known ones, consider a linear scalar
neutral differential equation 
\begin{equation}\label{4.10}
\dot{x}(t) - a(t)\dot{x}(g(t)) =\sum_{k=1}^m b_k(t) x(h_k(t)), ~t\geq t_0\geq 0,
\end{equation}
where  $|a(t)|\leq a_0<1$, $t-g(t)\leq \sigma$, $t-h_k(t)\leq \tau_k$. 
Theorems \ref{theorem3.2} and \ref{theorem3.2a} imply the following stability tests for scalar equation \eqref{4.10}.

\begin{corollary}\label{corollary4.10}
Assume that for some $\beta>0,$ $\displaystyle b(t):= \sum_{k=1}^m b_k(t) \leq -\beta <0,~t\geq t_0$ and either 
\begin{equation}\label{4.11}
\sum_{k=1}^m\|b_k\|_{[t_0,\infty)}\left(\left\|\frac{a}{b}\right\|+\sum_{k=1}^m \tau_k \left\|\frac{b_k}{b}  \right\|\right)<1-\|a\|_{[t_0,\infty)}
\end{equation}
or
\begin{equation}\label{4.12}
\|b\|_{[t_0,\infty)}\left(\left\|\frac{a}{b}\right\|
+\sum_{k=1}^m \tau_k \left\|\frac{b_k}{b}\right\|\right)<1-\|a\|_{[t_0,\infty)}-\sum_{k=1}^m \tau_k \|b_k\|.
\end{equation}
Then equation \eqref{4.10} is uniformly exponentially stable.
\end{corollary}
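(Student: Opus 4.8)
The plan is to derive Corollary~\ref{corollary4.10} directly from Theorems~\ref{theorem3.2} and~\ref{theorem3.2a} by specializing them to the scalar case $n=1$, with $A=a$ and $B_k=b_k$ in \eqref{4.10}. The single structural observation that makes this work is how the matrix norm and matrix measure degenerate in dimension one: for a $1\times 1$ matrix $[c]$ the induced norm is $\|[c]\|=|c|$, whereas the formula $\mu(C)=\max_i\{c_{ii}+\sum_{j\neq i}|c_{ij}|\}$ from the preliminaries collapses to $\mu([c])=c$, since there is a single diagonal entry and no off-diagonal terms. Consequently $B(t)=b(t)$ gives $\mu(B(t))=b(t)$, so the matrix-measure hypothesis $\mu(B(t))\leq-\beta$ of both theorems turns into the sign condition $b(t)\leq-\beta<0$ assumed in the corollary.

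First I would rewrite every norm in Theorem~\ref{theorem3.2} in scalar form: $\|A\|_{[t_0,\infty)}=\|a\|_{[t_0,\infty)}$, $\|B_k\|_{[t_0,\infty)}=\|b_k\|_{[t_0,\infty)}$, and, using $\mu(B)=b$, also $\|A/\mu(B)\|=\|a/b\|$ and $\|B_k/\mu(B)\|=\|b_k/b\|$; these ratios are well defined because $b(t)\leq-\beta<0$ keeps $b$ away from zero. The standing assumption $|a(t)|\leq a_0<1$ gives $\|a\|_{[t_0,\infty)}\leq a_0<1$, so the requirement $\|A\|_{[t_0,\infty)}<1$ of Theorem~\ref{theorem3.2} holds. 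Multiplying the resulting scalar form of \eqref{3.12a} through by the positive factor $1-\|a\|_{[t_0,\infty)}$ reproduces \eqref{4.11} verbatim, so whenever \eqref{4.11} is assumed, Theorem~\ref{theorem3.2} yields uniform exponential stability of \eqref{4.10}.

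For the second branch I would carry out the same substitution in Theorem~\ref{theorem3.2a}; clearing its denominator converts its displayed inequality into $\|b\|(\|a/b\|+\sum_k\tau_k\|b_k/b\|)<1-\|a\|-\sum_k\tau_k\|b_k\|$, which is exactly \eqref{4.12}. The only point needing an explicit word is the auxiliary hypothesis $\|A\|_{[t_0,\infty)}+\sum_k\tau_k\|B_k\|_{[t_0,\infty)}<1$ that Theorem~\ref{theorem3.2a} also requires but the corollary does not state separately: this is forced by \eqref{4.12} itself. Indeed, since $b\leq-\beta<0$ we have $b\not\equiv0$, hence some $b_k\not\equiv0$, and as each $\tau_k>0$ the factor $\|a/b\|+\sum_k\tau_k\|b_k/b\|$ is strictly positive; together with $\|b\|\geq\beta>0$ this makes the left-hand side of \eqref{4.12} positive, which forces its right-hand side $1-\|a\|-\sum_k\tau_k\|b_k\|$ to be positive too. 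Thus \eqref{4.12} already guarantees $\|a\|+\sum_k\tau_k\|b_k\|<1$, and Theorem~\ref{theorem3.2a} applies, giving uniform exponential stability in this case as well.

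I do not anticipate a genuine obstacle: the argument is a dimension reduction followed by elementary clearing of denominators. The one item that deserves care is the fact that the matrix measure of a scalar equals that scalar rather than its absolute value---this is precisely what lets the sign condition $b(t)\leq-\beta$ stand in for $\mu(B(t))\leq-\beta$---together with the small verification that the extra side condition of Theorem~\ref{theorem3.2a} is subsumed by \eqref{4.12}.
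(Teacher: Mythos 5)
Your proposal is correct and follows exactly the route the paper intends: the paper states Corollary~\ref{corollary4.10} as an immediate specialization of Theorems~\ref{theorem3.2} and~\ref{theorem3.2a} to the scalar case, where $\mu(B)=b$ and the norms reduce to absolute values, so clearing denominators in \eqref{3.12a} and in the hypothesis of Theorem~\ref{theorem3.2a} yields \eqref{4.11} and \eqref{4.12} respectively. Your additional check that \eqref{4.12} forces $\|a\|_{[t_0,\infty)}+\sum_k\tau_k\|b_k\|_{[t_0,\infty)}<1$ (since the left-hand side of \eqref{4.12} is strictly positive) is a detail the paper leaves implicit, and it is handled correctly.
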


Condition \eqref{4.11}
is known and coincides with 
\cite[Corollary 5.4]{BB1}. 
However, the stability test in \eqref{4.12} is new, to the best of our knowledge, and independent of \eqref{4.11} which is
illustrated in the following example. 

\begin{example}\label{example4.10} 
Consider a scalar equation with variable oscillating coefficients
\begin{equation}\label{4.13}
\dot{x}(t)-\nu\left[0.1 \sin t ~\dot{x}(g(t))=-(1-3\cos t) x(t)-(1+3\cos t) x(h(t))\right],
\end{equation}
where $\nu>0, t-g(t)\leq \sigma, t-h(t)\leq 1=\tau$.
Here $m=2$ and 
$$
\|a\|_{[t_0,\infty)}=0.1 \nu,~\|b_1\|_{[t_0,\infty)}=\|b_2\|_{[t_0,\infty)}=4\nu,~b=b_1+b_2=-2\nu<0, \tau=1.
$$

By \eqref{4.11} equation \eqref{4.13} is uniformly exponentially stable if $\nu<\frac{1}{16.5}$, by \eqref{4.12} the stability condition is 
$\nu<\frac{1}{8.2}$. Hence for this equation condition \eqref{4.12} is better than \eqref{4.11}.

In equation \eqref{4.13}, the value of $\|b_1+b_2\|_{[t_0,\infty)}$ is significantly less than $\|b_1\|_{[t_0,\infty)}+\|b_2\|_{[t_0,\infty)}$.
If they are close, condition \eqref{4.11} is better than \eqref{4.12}. 
For example,  if the term of $3\cos t$ is omitted in the coefficients in 
\eqref{4.13}, estimate in \eqref{4.11} becomes sharper than in \eqref{4.12} ($\nu<\frac{1}{2.2}$ compared to  $\nu<\frac{1}{3.2}$, respectively).
\end{example}

Note that all coefficients in \eqref{4.13} of Example~\ref{example4.10} are oscillating, while most known stability tests even 
for equations without the neutral part deal with positive coefficients.

To compare the results of the present paper with known stability tests,
we adapt Theorems \ref{theorem3.2} and \ref{theorem3.2a} 
to a non-autonomous  equation generalizing \eqref{1.2a} 
\begin{equation}\label{1.2b}
\dot{x}(t) - A_1(t) \dot{x}(H_1(t)) =A_0(t) x(t)+A_2(t) x(H_2(t)), ~t \geq t_0,
\end{equation}
where $0\leq t-H_1(t)\leq h_1, 0\leq t-H_2(t)\leq h_2$ for some $h_1, h_2>0$.

\begin{corollary}\label{corollary4.1}
Assume that for some $\beta>0$, $\mu(A_0(t)+A_2(t))\leq -\beta<0, ~t\geq t_0$  
and at least one of the following  conditions holds:

\begin{equation}\label{4.1}
\displaystyle
\|A_1\|_{[t_0,\infty)}<1,~ -\beta+\frac{(\|A_0\|_{[t_0,\infty)}
+\|A_2\|_{[t_0,\infty)})(\|A_1\|_{[t_0,\infty)}+h_2\|A_2\|_{[t_0,\infty)})}{1-\|A_1\|_{[t_0,\infty)}}<0;
\end{equation}

\begin{equation}\label{4.2}
\displaystyle
\|A_1\|_{[t_0,\infty)}+h_2\|A_2\|_{[t_0,\infty)}<1,~ 
-\beta+\frac{\|A_0+A_2\|_{[t_0,\infty)}(\|A_1\|_{[t_0,\infty)}
+h_2\|A_2\|_{[t_0,\infty)})}{1-\|A_1\|_{[t_0,\infty)}-h_2\|A_2\|_{[t_0,\infty)}}<0.
\end{equation}
Then equation \eqref{1.2b} is uniformly exponentially stable. 
\end{corollary}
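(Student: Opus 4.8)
The plan is to recognize equation \eqref{1.2b} as the special case $m=2$ of the general neutral system \eqref{1.2}, and then to invoke Theorems \ref{theorem3.2} and \ref{theorem3.2a} verbatim. The only work is to reduce the $\mu(B)$-weighted norms appearing in those theorems to ordinary norms, which is possible precisely because $\mu(A_0+A_2)$ is bounded away from zero from below.

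First I would fix the dictionary between the two equations. Setting $A(t)=A_1(t)$, $g(t)=H_1(t)$ (so that $\sigma=h_1$), and taking $m=2$ on the right-hand side, with the non-delay term $A_0(t)x(t)$ in the role of $B_1(t)=A_0(t)$ carrying zero delay bound $\tau_1=0$, and $A_2(t)x(H_2(t))$ in the role of $B_2(t)=A_2(t)$ with delay bound $\tau_2=h_2$, equation \eqref{1.2b} becomes exactly \eqref{1.2}. Under this identification $B(t):=\sum_{k=1}^2 B_k(t)=A_0(t)+A_2(t)$, so the standing hypothesis $\mu(A_0(t)+A_2(t))\leq -\beta$ is precisely the condition $\mu(B(t))\leq -\beta$ required by both theorems.

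The key step is the elementary observation that $\mu(B(t))\leq -\beta<0$ forces $|\mu(B(t))|\geq\beta$ for all $t$, whence, for any bounded matrix function $C$,
\[
\left\|\frac{C}{\mu(B)}\right\|_{[t_0,\infty)}=\esssup_{t\geq t_0}\frac{\|C(t)\|}{|\mu(B(t))|}\leq \frac{1}{\beta}\,\|C\|_{[t_0,\infty)}.
\]
Applying this with $C=A_1$ and $C=A_2$, and using $\tau_1=0$, $\tau_2=h_2$, I would bound the weighted factor $\bigl\|A/\mu(B)\bigr\|+\sum_k\tau_k\bigl\|B_k/\mu(B)\bigr\|$ in the main hypothesis of Theorem \ref{theorem3.2} by $\frac{1}{\beta}\bigl(\|A_1\|_{[t_0,\infty)}+h_2\|A_2\|_{[t_0,\infty)}\bigr)$. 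Since $\sum_{k=1}^2\|B_k\|=\|A_0\|+\|A_2\|$, the entire left-hand side of condition \eqref{3.12a} is then dominated by $\frac{(\|A_0\|+\|A_2\|)(\|A_1\|+h_2\|A_2\|)}{\beta(1-\|A_1\|)}$; and condition \eqref{4.1} is exactly the assertion that this dominating quantity is less than $1$, which together with $\|A_1\|<1$ delivers the full hypothesis of Theorem \ref{theorem3.2} and hence uniform exponential stability. The argument for \eqref{4.2} is identical, with Theorem \ref{theorem3.2a} replacing Theorem \ref{theorem3.2}: here $\|B\|=\|A_0+A_2\|$, the denominator becomes $1-\|A_1\|-h_2\|A_2\|$, and the same weighted-norm bound converts the main hypothesis into the inequality displayed in \eqref{4.2}.

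There is no serious obstacle here; the single point requiring care is that the passage from the $\mu(B)$-weighted norms to plain norms is an inequality in the correct direction, so that \eqref{4.1} and \eqref{4.2} are genuinely sufficient (and in general strictly stronger than the raw hypotheses they replace). I would also remark, as a sanity check, that the neutral delay bound $h_1$ never enters the conclusion, consistent with the fact that $\sigma$ does not appear in Theorems \ref{theorem3.2} and \ref{theorem3.2a}.
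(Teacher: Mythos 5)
Your proposal is correct and follows essentially the same route as the paper's own proof: treating \eqref{1.2b} as the $m=2$ case of \eqref{1.2} with $B_1=A_0$ ($\tau_1=0$), $B_2=A_2$ ($\tau_2=h_2$), and using $|\mu(A_0+A_2)|\geq\beta$ to replace the $\mu$-weighted norms in \eqref{3.12a} (and its analogue in Theorem \ref{theorem3.2a}) by $\frac{1}{\beta}$ times plain norms, which turns \eqref{4.1} and \eqref{4.2} into the hypotheses of Theorems \ref{theorem3.2} and \ref{theorem3.2a}, respectively. If anything, your write-up states the direction of the implication (condition \eqref{4.1} $\Rightarrow$ \eqref{3.12a}) more carefully than the paper's wording does.
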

\begin{proof}
Assume first that \eqref{4.1} holds. We will apply Theorem \ref{theorem3.2} for $m=2$ to equation \eqref{1.2b}.
 Inequality \eqref{3.12a} for \eqref{1.2b} has the form
 $$
\frac{\|A_0\|_{[t_0,\infty)}+\|A_2\|_{[t_0,\infty)}}{1-\|A_1\|_{[t_0,\infty)}}
\left(\left\|\frac{A_1}{\mu(A_0+A_2)}\right\|_{[t_0,\infty)}
+h_2\left\|\frac{A_2}{\mu(A_0+A_2)}\right\|_{[t_0,\infty)}\right)<1,
$$
which, due to $|\mu(A_0(t)+A_2(t))| \geq \beta$, yields that
\begin{equation}\label{4.3}
\frac{\|A_0\|_{[t_0,\infty)}+\|A_2\|_{[t_0,\infty)}}{1-\|A_1\|_{[t_0,\infty)}}
\left(\|A_1\|_{[t_0,\infty)}+h_2\|A_2\|_{[t_0,\infty)}\right)<\beta.
\end{equation}
Inequality \eqref{4.3} is equivalent to the second inequality in \eqref{4.1}. 

All conditions of Theorem \ref{theorem3.2} hold for equation \eqref{1.2b}, hence this equation is uniformly exponentially stable.

The second part of the corollary follows from Theorem \ref{theorem3.2a} and is justified in a similar way. 
\end{proof}

Proposition \ref{p1} is independent of Proposition \ref{p2} and of Corollary \ref{corollary4.1}. Condition~\eqref{4.1} in Corollary~\ref{corollary4.1} is independent of both propositions  and of the stability test in \eqref{4.2}, while 
\eqref{4.2} coincides with Proposition \ref{p2}
in the case of constant coefficients and delays.

Several stability results for linear neutral systems were obtained by the LMI method based on Lyapunov-Krasovskii functionals.
Usually  neutral systems  studied by the LMI method have constant matrix coefficients and bounded derivatives of delay functions. 

Consider now the following example of a non-autonomous system of neutral type with variable matrix coefficients $n\times n$.
Note that with the growth of dimension the calculations in LMI increase dramatically.  

\begin{example}\label{example2}
Consider system \eqref{1.2} for $m=2$
\begin{equation}\label{1.2A}
\dot{x}(t) -  A(t)\dot{x}(g(t)) = B_1(t)x(h_1(t))+B_2(t)x(h_2(t)), ~~t \geq t_0\geq 0,
\end{equation}
where $B_1(t)=\sin^2 t~ C$, $B_2(t)=\cos^2 t ~C$,  $C$ is a tri-diagonal and $A$ is a variable matrix
$$
C=\begin{pmatrix}
-\alpha&\beta&0&0&\dots&0&0\\
\beta/2 &-\alpha&\beta/2&0&\dots&0&0\\
0&\beta/2 &-\alpha&\beta/2&\dots&0&0\\
\vdots&\vdots&\vdots&\vdots&\vdots&\vdots&\vdots\\
0&0 &0&0 &\dots &\beta&-\alpha
\end{pmatrix},
~~A=\gamma\begin{pmatrix}
\cos t& \cos 2t&\cos 3t&\dots&\cos nt\\
\cos^2 t& \cos^2 2t&\cos^2 3t&\dots&\cos^2 nt\\
\vdots&\vdots&\vdots&\vdots&\vdots\\
\cos^n t& \cos^n 2t&\cos^n 3t&\dots&\cos^n nt\\
\end{pmatrix}, 
$$
$\alpha>0, h_1(t)=t-0.1|\sin t|, h_2(t)=t-0.1|\cos t|, t-g(t)\leq \sigma$.
We have for the norm $\|\cdot \|_{[0,\infty)}$:
$$
\|A\|_{[0,\infty)}=n|\gamma|, ~~\tau_i=\sup_{t\geq 0} (t-h_i(t))=0.1, ~~\|B_i\|_{[0,\infty)}=\alpha+|\beta|, ~i=1,2,$$
$B=B_1+B_2$, $\mu (B)=-\alpha+|\beta|$.
Let
\begin{equation}\label{4.1a}
|\beta|<\alpha,  ~ n|\gamma|<1, ~
2(\alpha+|\beta|)[n|\gamma|  +0.2(\alpha+|\beta|)]<(1-n|\gamma|)(\alpha-|\beta|).
\end{equation}
Then, by Theorem \ref{theorem3.2} equation \eqref{1.2A}  
is uniformly exponentially stable.

In particular,  conditions \eqref{4.1a} hold for $\alpha=0.4$, $|\beta|=0.1$, $n|\gamma|=0.01$.

Assume 
in addition $\|A\|=n|\gamma|=0.01$, $\lambda=0.06$, $\sigma=0.1$. Hence $\|B_i\|_{[0,\infty)}=0.5$, $\mu(C)=-0.3$
and calculate by Theorem \ref{theorem3.1}  
an estimate for solution $x$ of the equation
\begin{equation}\label{1.2B}
\dot{x}(t) - A(t)\dot{x}(g(t)) = B_1(t)x(h_1(t))+B_2(t)x(h_2(t))+f(t), ~~t \geq t_0\geq 0,
\end{equation}
with initial condition \eqref{2.2}. We have 
\begin{align*}
\mu (P(t))= & \mu\left(e^{\lambda (t-h_1(t))}B_1(t)+e^{\lambda (t-h_2(t))}B_2(t)-\lambda e^{\lambda (t-g(t))} A(t)+\lambda E\right)
\\
& \leq \mu\left(e^{\lambda (t-h_1(t))}B_1(t)+e^{\lambda (t-h_2(t))}B_2(t)\right)+\mu\left(-\lambda e^{\lambda (t-g(t))} A(t)\right)+\mu(\lambda E).
\end{align*}
Then
$$
\mu\left(e^{\lambda (t-h_1(t))}B_1(t)+e^{\lambda (t-h_2(t))}B_2(t)\right)=\left(e^{\lambda (t-h_1(t))}\sin^2 t+e^{\lambda (t-h_2(t))}\cos^2 t\right) \mu (C).
$$
Since $\mu (C)=-0.3<0$ and 
$$
e^{\lambda (t-h_1(t))}\sin^2 t+e^{\lambda (t-h_2(t))}\cos^2 t\geq \sin^2 t+\cos^2 t=1,
$$ 
we get
$$
\mu\left(e^{\lambda (t-h_1(t))}B_1(t)+e^{\lambda (t-h_2(t))}B_2(t)\right)\leq \mu(C)=-0.3.
$$
Next,
$$
\mu\left(-\lambda e^{\lambda (t-g(t))} A(t)\right)\leq \lambda e^{\lambda (t-g(t))} \|A\|_{[0,\infty)}\leq 0.0006 e^{0.006},~
\mu(\lambda E)=\lambda =0.06.
$$
Then $\mu (P(t)) \leq -0.23939<0$ and
$$
M_1:=
\frac{\lambda +\sum\limits_{k=1}^2 e^{\lambda\tau_k}\|B_k\|_{[t_0,\infty)}+
\lambda e^{\lambda\sigma}\|A\|_{[t_0,\infty)}}{1-e^{\lambda\sigma}\|A\|_{[t_0,\infty)}}
\left(\left\|\frac{A}{\mu (P)}\right\|_{[t_0,\infty)} \!\!\! \!\!\! \!\!\!\! (1+\lambda\sigma)e^{\lambda\sigma}
+\sum_{k=1}^2  \left\|\frac{ B_k}{\mu (P)}\right\|_{[t_0,\infty)} \!\!\!\!\!\!\!\!\!\! e^{\lambda\tau_k}\tau_k \right).
$$
By numerical calculations, $M_1\leq 0.5$ and  $M_0=(1-M_1)^{-1}\leq 2$. 
For the solution $x$ of problem \eqref{1.2B},\eqref{2.2} we have (here we omitted the norm indices)
$$
\|x(t)\|\leq \left. \left. 2 e^{-0.06(t-t_0)}\right[ \|x(t_0)\|+0.00102 \|\Psi\| +0.102\|\Phi\|\right]+33.6 \|f\|, ~t\geq t_0\geq 0.
$$
\end{example}

Let us conclude with some additional comments. Several explicit stability conditions for delay and neutral systems were obtained by application of the Bohl-Perron theorem \cite{ AzbSim,BDSZ1,BDSZ2,BDSZ3,Dom,Gil}. 
In general, these tests do not coincide with results of this paper even in the scalar case, see discussion in \cite{BB2}.
However, some conditions can be the same, for example,
Corollary~\ref{corollary3.4} of the present paper and \cite[Corollary 1]{BDSZ3} coincide.

In interesting papers \cite{Ngoc2,Ngoc1}, the authors study neutral systems in the Hale form. 
They obtained explicit exponential stability conditions 
for rather general linear neutral systems. The method applied in \cite{Ngoc2,Ngoc1} is based 
on some specific spectral matrix properties, such as a Metzler matrix with non-negative off-diagonal entries.

Fixed point methods also lead to explicit stability  condition, but so far they were  applied only to scalar neutral equations,
and used the first and the second derivatives of delay functions. The advantage of fixed point approach is that it can also be applied to nonlinear equations. The method described here is only considered for linear systems.

Finally, we suggest some relevant topics for future research.

\begin{enumerate}
\item
Here we used the essential supremum norm which led to exponential stability tests including essential supremum estimates.
It would be interesting to get integral stability conditions for neutral systems considered in the paper, 
for the scalar case see \cite[Theorem 4.7]{BB1}.
\item
Obtain stability conditions assuming that either delays or coefficients, or both, can be unbounded.
\item
Investigate exponential stability for neutral systems of a higher order. 
\item
Are uniform asymptotic stability and uniform exponential stability equivalent for linear neutral systems with
bounded delays? For equations without a neutral term this is known \cite{Hale, Malygina}. 
\end{enumerate}

\section*{Acknowledgment}

E. Braverman  was partially supported by NSERC, the grant RGPIN-2020-03934.
The authors are very grateful to the anonymous referees whose thoughtful comments significantly contributed to the quality of presentation.

\end{document}